\documentclass[12pt]{amsart}

\usepackage{a4wide}
\usepackage{enumerate}

\newtheorem{theorem}{Theorem}[section]
\newtheorem{lemma}[theorem]{Lemma}
\newtheorem{proposition}[theorem]{Proposition}
\newtheorem{corollary}[theorem]{Corollary}

\theoremstyle{remark}
\newtheorem{remark}[theorem]{Remark}

\newcommand{\C}{\ensuremath{\mathbb{C}}}
\newcommand{\R}{\ensuremath{\mathbb{R}}}
\renewcommand{\H}{\ensuremath{\mathbb{H}}}
\newcommand{\g}[1]{\ensuremath{\mathfrak{#1}}}

\newcommand{\mi}{\ensuremath{m_{0}}}
\newcommand{\mii}{\ensuremath{m_{\pi/2}}}

\DeclareMathOperator{\sech}{sech}
\DeclareMathOperator{\csch}{csch}
\DeclareMathOperator{\codim}{codim}
\DeclareMathOperator{\id}{I}

\begin{document}
\title[Isoparametric hypersurfaces]{Isoparametric
hypersurfaces in Damek-Ricci spaces}

\author[J.C. D\'\i{}az-Ramos]{Jos\'{e} Carlos D\'\i{}az-Ramos}
\address{Department of Geometry and Topology,
University of Santiago de Compostela, Spain.}
\email{josecarlos.diaz@usc.es}

\author[M. Dom\'{\i}nguez-V\'{a}zquez]{Miguel Dom\'{\i}nguez-V\'{a}zquez}
\address{Department of Geometry and Topology,
University of Santiago de Compostela, Spain.}
\email{miguel.dominguez@usc.es}

\thanks{The first author has been supported by a Marie-Curie
European Reintegration Grant (PERG04-GA-2008-239162). The second
author has been supported by the FPU programme of the Spanish
Government. Both authors have been supported by projects
MTM2009-07756 and INCITE09207151PR (Spain).}

\subjclass[2010]{Primary 53C40, Secondary 53C30, 53C35}


\begin{abstract}
We construct uncountably many isoparametric families of hypersurfaces
in Damek-Ricci spaces. We characterize those of them that have
constant principal curvatures by means of the new concept of
generalized K\"ahler angle. It follows that, in general, these
examples are inhomogeneous and have nonconstant principal curvatures.

We also find new cohomogeneity one actions on quaternionic hyperbolic
spaces, and an isoparametric family of inhomogeneous hypersurfaces
with constant principal curvatures in the Cayley hyperbolic plane.
\end{abstract}

\keywords{Isoparametric hypersurfaces, homogeneous submanifolds,
constant principal curvatures, Damek-Ricci harmonic spaces,
generalized K\"{a}hler angle, cohomogeneity one action}

\maketitle

\section{Introduction}

A connected hypersurface of a Riemannian manifold is called an
isoparametric hypersurface if its nearby parallel hypersurfaces have
constant mean curvature. Cartan characterized isoparametric
hypersurfaces in real space forms as those hypersurfaces with
constant principal curvatures, and achieved the classification of
these objects in real hyperbolic spaces. Segre found the analogous
classification for Euclidean spaces. In both cases, every
isoparametric hypersurface is an open part of an (extrinsically)
homogeneous one, that is, an open part of an orbit of a cohomogeneity
one action.

Nevertheless, the problem in spheres turned out to be much more
involved and rich. Although the classification of homogeneous
hypersurfaces in spheres is known, it is remarkable that not every
complete isoparametric hypersurface in a sphere is homogeneous. The
known inhomogeneous examples were constructed by Ferus, Karcher and
M\"unzner in \cite{FKM81}. Recently, much progress has been made for
spheres~\cite{CCJ07,Ch11,Ch,I08,Mi}. A complete classification has
not been achieved yet, but there is only one unsettled case. For a
survey on these problems and other related topics, we refer to
\cite{T00} and~\cite{T10}.

In more general ambient spaces of nonconstant curvature, the
equivalence between isoparametric hypersurfaces and hypersurfaces
with constant principal curvatures is no longer true. The first
examples were found by Wang \cite{W82}, who constructed some
inhomogeneous isoparametric hypersurfaces with nonconstant principal
curvatures in the complex projective space, by projecting some of the
inhomogeneous hypersurfaces in spheres via the Hopf map.

Recently, the authors have found a large set of inhomogeneous
isoparametric hypersurfaces with nonconstant principal curvatures in
complex hyperbolic spaces \cite{DRDV10}. To our knowledge, these are
the first examples of inhomogeneous isoparametric hypersurfaces in
Riemannian symmetric spaces whose construction does not depend on the
examples in spheres.

The aim of this article is to provide a construction method of
isoparametric hypersurfaces in Damek-Ricci harmonic spaces. These
homogeneous spaces are a family that contains the rank one noncompact
symmetric spaces as particular cases. They were constructed by Damek
and Ricci in \cite{DR92} and they provide counterexamples to the
so-called Lichnerowicz conjecture, stating that every Riemannian
harmonic manifold is locally isometric to a two-point homogeneous
space. The hypersurfaces that we introduce arise as tubes around
certain homogeneous minimal submanifolds whose construction extends
the one proposed by Berndt and Br\"uck~\cite{BB01}, and by the
authors~\cite{DRDV10}. Although the definition of the new examples is
relatively straightforward, the verification that these examples have
the desired properties is far from being trivial.

The main concept introduced in this paper is that of
\emph{generalized K\"{a}hler angle}, which generalizes previous notions
of K\"{a}hler angle and quaternionic K\"{a}hler angle~\cite{BB01}. Among the
isoparametric hypersurfaces we construct in this paper, the ones with
constant principal curvatures are precisely those whose focal
submanifolds have normal spaces of constant generalized K\"ahler
angle (Theorem~\ref{thMain}). As a consequence, we obtain uncountably
many noncongruent isoparametric families of inhomogeneous
hypersurfaces with nonconstant principal curvatures in complex and
quaternionic hyperbolic spaces. Compare this with the case of
spheres, where the known set of inhomogeneous isoparametric families
is countable~\cite{FKM81}.

For the quaternionic hyperbolic spaces, we also provide new examples
of cohomogeneity one actions. Recall that quaternionic hyperbolic
spaces are the unique Riemannian symmetric spaces of rank one for
which a classification of cohomogeneity one actions is still not
known.

In the Cayley hyperbolic plane, our method yields an inhomogeneous
isoparametric family of hypersurfaces with constant principal
curvatures, which is, to our knowledge, the first such example in a
Riemmanian symmetric space different from a sphere
(cf.~\cite[p.~7]{GTY}).

This article is organized as follows. In
Section~\ref{secPreliminaries} we set up the fundamental definitions
and results on Damek-Ricci spaces. The definition of generalized
K\"{a}hler angle is presented in Section~\ref{secGKA}. In
Section~\ref{sec:Examples} the new examples of isoparametric
hypersurfaces in Damek-Ricci harmonic spaces are introduced. We start
by defining the focal set of the new examples in \S\ref{secFocal},
and then in \S\ref{secJacobi} we investigate the properties of the
tubes around these submanifolds using Jacobi field theory. The main
result of this work is stated in Theorem~\ref{thMain}. Finally, in
Section~\ref{secSymmetric} we consider some particular cases in the
rank one symmetric spaces of noncompact type. In \S\ref{sec:HHn} we
construct new examples of cohomogeneity one actions on quaternionic
hyperbolic spaces (Theorem~\ref{th:cohom1}), and in \S\ref{sec:OH2}
we give an example of an inhomogeneous isoparametric hypersurface in
the Cayley hyperbolic plane (Theorem~\ref{th:CH2}).

The authors would like to thank Prof.\ J\"{u}rgen Berndt for reading a
draft version of this paper and suggesting a proof of
Theorem~\ref{th:cohom1}.

\section{Generalized Heisenberg groups and Damek-Ricci spaces}\label{secPreliminaries}

In this section we recall the construction of Damek-Ricci spaces,
presenting some of the properties that we will use later. Since the
description of such spaces depends on the so-called generalized
Heisenberg algebras, we begin by defining these structures. The main
reference for all these notions is \cite{BTV95}, where one can find
the proofs of the results presented below, as well as further
information on Damek-Ricci spaces.

\subsection{Generalized Heisenberg algebras and
groups}\label{sec:Heisenberg}
Let $\g{v}$ and $\g{z}$ be real vector
spaces and $\beta:\g{v}\times\g{v}\to\g{z}$ a skew-symmetric bilinear
map. Define the direct sum $\g{n}=\g{v}\oplus\g{z}$ and endow it with
an inner product $\langle\cdot, \cdot\rangle_\g{n}$ such that $\g{v}$
and $\g{z}$ are perpendicular. Define a linear map $J\colon
Z\in\g{z}\mapsto J_Z\in\textup{End}(\g{v})$ by
\[
\langle J_Z U,V\rangle=\langle\beta(U,V),Z\rangle,\quad
\text{ for all } U, V\in\g{v}, \; Z\in\g{z},
\]
and a Lie algebra structure on $\g{n}$ by
\[
[U+X,V+Y]=\beta(U,V),\quad \text{ for all } U,V\in\g{v}, \; X,Y\in\g{z},
\]
or equivalently, by
\[
\langle[U,V],X\rangle=\langle J_X U, V\rangle,
\quad [X,V]=[U,Y]=[X,Y]=0,\quad
\text{ for all } U,V\in\g{v}, \; X,Y\in\g{z}.
\]
Then, $\g{n}$ is a two-step nilpotent Lie algebra with center
$\g{z}$, and, if $J_Z^2=-\langle Z,Z\rangle \id_\g{v}$ for all
$Z\in\g{z}$, $\g{n}$ is said to be a \emph{generalized Heisenberg
algebra} or an \emph{H-type algebra}. (Here and henceforth the
identity is denoted by $\id$.) The associated simply connected
nilpotent Lie group $N$, endowed with the induced left-invariant
Riemannian metric, is called a \emph{generalized Heisenberg group} or
an \emph{H-type group}.

Let $U$, $V\in\g{v}$ and $X$, $Y\in\g{z}$. In this work, we will make
use of the following properties of generalized Heisenberg algebras
without explicitly referring to them:
\begin{align*}
J_X J_Y+J_Y J_X&=-2\langle X,Y\rangle \id_\g{v},&
[J_X U, V]-[U, J_X V]&=-2\langle U,V\rangle X,\\
\langle J_X U, J_X V\rangle&=\langle X,X\rangle\langle U, V\rangle,&
\langle J_X U, J_Y U\rangle&=\langle X,Y\rangle\langle U, U\rangle.
\end{align*}
In particular, for any unit $Z\in\g{z}$, $J_Z$ is an almost Hermitian
structure on $\g{v}$.

The map $J\colon\g{z}\to\textup{End}(\g{v})$ can be extended to the
Clifford algebra $Cl(\g{z},q)$, where $q$ is the quadratic form given
by $q(Z)=-\langle Z,Z\rangle$, in such a way that $\g{v}$ becomes now
a Clifford module over $Cl(\g{z},q)$ (see \cite[Chapter 3]{BTV95}).
The classification of generalized Heisenberg algebras is known (it
follows from the classification of representations of Clifford
algebras of vector spaces with negative definite quadratic forms). In
particular, for each $m\in\mathbb{N}$ there exist an infinite number
of non-isomorphic generalized Heisenberg algebras with $\dim
\g{z}=m$.

\subsection{Damek-Ricci spaces}\label{secDamekRicci}

Let $\g{a}$ be a one-dimensional real vector space, $B$ a non-zero
vector in $\g{a}$ and $\g{n}=\g{v}\oplus\g{z}$ a generalized
Heisenberg algebra, where $\g{z}$ is the center of $\g{n}$. We denote
the inner product and the Lie bracket on $\g{n}$ by
$\langle\cdot,\cdot\rangle_{\g{n}}$ and $[\cdot,\cdot]_{\g{n}}$,
respectively, and consider a new vector space $\g{a}\oplus\g{n}$ as
the vector space direct sum of $\g{a}$ and $\g{n}$.

From now on in this section, let $s$, $r\in\R$, $U$, $V\in\g{v}$ and
$X$, $Y \in\g{z}$. We now define an inner product
$\langle\cdot,\cdot\rangle$ and a Lie bracket $[\cdot,\cdot]$ on
$\g{a}\oplus\g{n}$ by
\begin{align*}
&\langle rB+U+X,sB+V+Y\rangle=rs+\langle U+X,V+Y\rangle_\g{n},\quad\text{ and }\\
&[rB+U+X,sB+V+Y]=[U,V]_\g{n}+\frac{1}{2}rV-\frac{1}{2}sU+rY-sX.
\end{align*}
Thus, $\g{a}\oplus\g{n}$ becomes a solvable Lie algebra with an
inner product. The corresponding simply connected Lie group $AN$,
equipped with the induced left-invariant Riemannian metric, is a
solvable extension of the H-type group $N$, and is called a
\emph{Damek-Ricci space}.

The Levi-Civita connection $\nabla$ of a Damek-Ricci space is given by
\[
\nabla_{sB+V+Y}(rB+U+X)=-\frac{1}{2}J_X V-\frac{1}{2}J_Y U-\frac{1}{2}rV
-\frac{1}{2}[U,V]-rY+\frac{1}{2}\langle U,V\rangle B+\langle X,Y\rangle B.
\]

From this expression, one can obtain the curvature tensor $R$ of
$AN$, where we agree to take the convention
$R(W_1,W_2)=[\nabla_{W_1}, \nabla_{W_2}] -\nabla_{[W_1,W_2]}$.

A Damek-Ricci space $AN$ is a symmetric space if and only if $AN$ is
isometric to a rank one symmetric space. In this case, $AN$ is either
isometric to a complex hyperbolic space $\C H^n$ with constant
holomorphic sectional curvature $-1$ (in this case, $\dim \g{z}=1$),
or to a quaternionic hyperbolic space $\mathbb{H}H^n$ with constant
quaternionic sectional curvature $-1$ (here $\dim \g{z}=3$), or to
the Cayley hyperbolic plane $\mathbb{O}H^2$ with minimal sectional
curvature $-1$ ($\dim \g{z}=7$). As a limit case, which we will
disregard in what follows, one would obtain the real hyperbolic space
$\R H^n$ if one puts $\g{z}=0$.

The non-symmetric Damek-Ricci spaces are counterexemples to the
so-called Lichnerowicz  conjecture, stating that every Riemannian
harmonic manifold is locally isometric to a two-point homogeneous
space. There are several equivalent conditions for a manifold to be
harmonic; see \cite[\S 2.6]{BTV95}. One of them is the following: a
manifold is harmonic if and only if its sufficiently small geodesic
spheres are isoparametric. However, while geodesic spheres in
symmetric Damek-Ricci spaces are homogeneous isoparametric
hypersurfaces with constant principal curvatures, geodesic spheres in
non-symmetric Damek-Ricci spaces are inhomogeneous isoparametric
hypersurfaces with nonconstant principal curvatures (see \cite{DR92}
and \cite[\S 4.4 and \S 4.5]{BTV95}).

\section{Generalized K\"{a}hler angle}\label{secGKA}

In this section we introduce the new notion of generalized K\"{a}hler
angle of a vector of a subspace of a Clifford module with respect to
that subspace. This notion will be crucial for the rest of the work.

Let $\g{v}$ be a Clifford module over $Cl(\g{z},q)$ and denote by
$J\colon\g{z}\to\textup{End}(\g{v})$ the restriction to $\g{z}$ of
the Clifford algebra representation. We equip $\g{z}$ with the inner
product induced by polarization of $-q$, and extend it to an inner
product $\langle\,\cdot\,,\,\cdot\,\rangle$ on
$\g{n}=\g{v}\oplus\g{z}$, so that $\g{v}$ and $\g{z}$ are
perpendicular, and $J_Z$ is an orthogonal map for each unit
$Z\in\g{z}$. Then, $\g{n}$ has the structure of a generalized
Heisenberg algebra as defined above.

Let $\g{w}$ be a subspace of $\g{v}$. We denote by
$\g{w}^\perp=\g{v}\ominus\g{w}$ the orthogonal complement of $\g{w}$
in $\g{v}$. For each $Z\in\g{z}$ and $\xi\in\g{w}^\perp$, we write
$J_Z \xi= P_Z \xi+F_Z \xi$, where $P_Z\xi$ is the orthogonal
projection of $J_Z\xi$ onto $\g{w}$, and $F_Z\xi$ is the orthogonal
projection of $J_Z\xi$ onto $\g{w}^\perp$. We define the K\"{a}hler angle
of $\xi\in\g{w}^\perp$ with respect to the element $Z\in\g{z}$ (or,
equivalently, with respect to $J_Z$) and the subspace
$\g{w}^\perp\subset \g{v}$ as the angle $\varphi\in[0,\pi/2]$ between
$J_Z\xi$ and $\g{w}^\perp$~\cite{BB01}; thus $\varphi$ satisfies
$\langle F_Z\xi,F_Z\xi\rangle =\cos^2(\varphi)\langle
Z,Z\rangle\langle \xi,\xi\rangle$. It readily follows from
$J_Z^2=-\langle Z,Z\rangle \id_\g{v}$ that $\langle
P_Z\xi,P_Z\xi\rangle =\sin^2(\varphi)\langle Z,Z\rangle \langle
\xi,\xi\rangle$. Hence, if $Z$ and $\xi$ have unit length, $\varphi$
is determined by the fact that $\cos(\varphi)$ is the length of the
orthogonal projection of $J_Z\xi$ onto $\g{w}^\perp$.

The following theorem is a generalization of \cite[Lemma 3]{BB01}
(which concerned only the case of the quaternionic hyperbolic space
$\mathbb{H}H^n$). The proof is new and simpler than in \cite{BB01}.
This result will be fundamental for the calculations we will carry
out later.

\begin{theorem} \label{thGKA}
Let $\g{w}^\perp$ be some vector subspace of $\g{v}$ and let
$\xi\in\g{w}^\perp$ be a nonzero vector. Then there exists an
orthonormal basis $\{Z_1,\dots, Z_m\}$ of $\g{z}$ and a uniquely
defined $m$-tuple $(\varphi_1,\dots,\varphi_m)$ such that:
\begin{enumerate}[{\rm (a)}]
\item $\varphi_i$ is the K\"ahler angle of $\xi$ with respect to
    $J_{Z_i}$, for each $i=1,\dots, m$.\label{thGKA:1}
\item $\langle P_{Z_i}\xi,P_{Z_j}\xi\rangle=\langle
    F_{Z_i}\xi,F_{Z_j}\xi\rangle=0$ whenever $i\neq j$.
\item
    $0\leq\varphi_1\leq\varphi_2\leq\dots\leq\varphi_m\leq\pi/2$.
\item $\varphi_1$ is minimal and $\varphi_m$ is maximal among the
    K\"ahler angles of $\xi$ with respect to all the elements of
    $\g{z}$.\label{thGKA:4}
\end{enumerate}
\end{theorem}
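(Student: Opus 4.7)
The plan is to reduce the statement to a single application of the spectral theorem for symmetric bilinear forms. With $\xi$ fixed, the map $Z\mapsto F_Z\xi$ is $\R$-linear from $\g{z}$ to $\g{w}^\perp$, so
\[
B(Y,Z):=\langle F_Y\xi,F_Z\xi\rangle
\]
defines a symmetric bilinear form on the Euclidean space $(\g{z},\langle\cdot,\cdot\rangle)$. The key observation, from which everything else will follow, is that all four properties in the theorem are controlled by $B$.

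I would then choose an orthonormal basis $\{Z_1,\dots,Z_m\}$ of $\g{z}$ diagonalising $B$, ordered so that the diagonal entries $B(Z_i,Z_i)$ are non-increasing. Defining $\varphi_i\in[0,\pi/2]$ by $\cos^2\varphi_i=B(Z_i,Z_i)/\langle\xi,\xi\rangle$ gives (a) straight from the definition of K\"ahler angle recalled before the theorem, while the ordering of the diagonal entries gives (c). The diagonalisation of $B$ yields the $F$-part of (b) directly. For the $P$-part I would invoke the H-type identity $\langle J_{Z_i}\xi,J_{Z_j}\xi\rangle=\langle Z_i,Z_j\rangle\langle\xi,\xi\rangle=0$ for $i\neq j$; together with the orthogonal decomposition $J_{Z_i}\xi=P_{Z_i}\xi+F_{Z_i}\xi$ (and $P_{Z_i}\xi\in\g{w}\perp\g{w}^\perp\ni F_{Z_j}\xi$), this gives
\[
0=\langle J_{Z_i}\xi,J_{Z_j}\xi\rangle=\langle P_{Z_i}\xi,P_{Z_j}\xi\rangle+\langle F_{Z_i}\xi,F_{Z_j}\xi\rangle,
\]
whence $\langle P_{Z_i}\xi,P_{Z_j}\xi\rangle=0$ follows automatically from the $F$-part.

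For (d), I would note that for any unit vector $Z\in\g{z}$ the K\"ahler angle $\varphi_Z$ of $\xi$ with respect to $J_Z$ satisfies $\cos^2\varphi_Z=B(Z,Z)/\langle\xi,\xi\rangle$. The Rayleigh-quotient characterisation of eigenvalues then places the maximum of $B(Z,Z)$ on the unit sphere at $Z_1$ and the minimum at $Z_m$, so $\varphi_1$ is minimal and $\varphi_m$ is maximal among all K\"ahler angles of $\xi$ with respect to unit elements of $\g{z}$. Uniqueness of the tuple $(\varphi_1,\dots,\varphi_m)$ is then immediate, since the multiset $\{\cos^2\varphi_i\}_{i=1}^{m}$ consists precisely of the eigenvalues of $B/\langle\xi,\xi\rangle$, which are intrinsic invariants of $B$ independent of the diagonalising basis.

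The only genuine content is the identification of $B$ as the right object to diagonalise; once that step is made, no real obstacle remains, as everything reduces to classical linear algebra on a finite-dimensional Euclidean space. This is consistent with the authors' remark that the proof is simpler than the quaternionic-only argument of \cite{BB01}.
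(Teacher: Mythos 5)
Your proof is correct and follows essentially the same route as the paper's: both diagonalize the symmetric bilinear form $L_\xi(X,Y)=\langle F_X\xi,F_Y\xi\rangle$ on $\g{z}$ via the spectral theorem, read off (a)--(c) and uniqueness from the eigenstructure, obtain (d) from the Rayleigh-quotient characterisation of the extremal eigenvalues, and deduce the $P$-orthogonality from $\langle J_{Z_i}\xi,J_{Z_j}\xi\rangle=\langle Z_i,Z_j\rangle\langle\xi,\xi\rangle$. The only superficial difference is that the paper introduces the quadratic form $Q_\xi$ first and passes to its polarisation $L_\xi$, whereas you work with the bilinear form directly.
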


\begin{proof}
Since the map $Z\in\g{z}\mapsto F_Z\xi\in\g{w}^\perp$ is linear, we
can define the quadratic form
\[
Q_\xi\colon Z\in \g{z}\mapsto \langle F_Z\xi,F_Z\xi\rangle\in\R.
\]
Observe that $\varphi$ is the K\"ahler angle of $\xi$ with respect to
$Z\in \g{z}$ ($Z\neq 0$) and the subspace $\g{w}^\perp\subset\g{v}$
if and only if $Q_\xi(Z)=\cos^2(\varphi)\langle Z, Z\rangle \langle
\xi, \xi \rangle$.

Let $\{Z_1,\dots, Z_m\}$ be an orthonormal basis of $\g{z}$ for which
the quadratic form $Q_\xi$ assumes a diagonal form. Define the real
numbers $\varphi_1,\dots,\varphi_m\in[0,\pi/2]$ by the expression
$Q_\xi(Z_i)=\cos^2(\varphi_i)\langle \xi, \xi \rangle$, for every
$i=1,\dots, m$. We can further assume that $\varphi_1\leq
\dots\leq\varphi_m$, by reordering the elements of the basis in a
suitable way.

If $L$ is the symmetric bilinear form associated with $Q_\xi$, then
$L_\xi(X,Y)=\langle F_X\xi, F_Y\xi\rangle$, for each $X$,
$Y\in\g{z}$. But then the fact that $\{Z_1,\dots, Z_m\}$ is an
orthonormal basis for which $Q_\xi$ assumes a diagonal form is
equivalent to $0=L_\xi(Z_i,Z_j)=\langle F_{Z_i}\xi,
F_{Z_j}\xi\rangle$ for all $i\neq j$. This, together with the
ordering of $(\varphi_1,\dots,\varphi_m)$ and the fact that
$\{Z_1,\dots, Z_m\}$ is an orthonormal basis, implies that the
$m$-tuple $(\varphi_1,\dots,\varphi_m)$ is uniquely defined for a
fixed $\g{w}^\perp$ and a fixed $\xi\in\g{w}^\perp$. Moreover, due to
the bilinearity of $L_\xi$, it is clear that $\varphi_1$ is minimal
and $\varphi_m$ is maximal among the K\"ahler angles of $\xi$ with
respect to all the elements of $\g{z}$. Finally, we also have that
$\langle P_{Z_i}\xi,P_{Z_j}\xi\rangle=\langle
J_{Z_i}\xi,J_{Z_j}\xi\rangle-\langle F_{Z_i}\xi,F_{Z_j}\xi\rangle=0$,
whenever $i\neq j$.
\end{proof}

Motivated by Theorem~\ref{thGKA}, we define the \emph{generalized
K\"ahler angle} of $\xi$ with respect to $\g{w}^\perp$ as the
$m$-tuple $(\varphi_1,\dots,\varphi_m)$ satisfying properties
(\ref{thGKA:1})-(\ref{thGKA:4}) of Theorem~\ref{thGKA}.

\begin{remark}
Observe that the K\"ahler angles $\varphi_1,\dots,\varphi_m$ depend,
not only on the subspace $\g{w}^\perp$ of $\g{v}$, but also on the
vector $\xi\in\g{w}^\perp$.
\end{remark}

Assuming the notation of the previous theorem, we will say that the
subspace $\g{w}^\perp$ of $\g{v}$ has \emph{constant generalized
K\"ahler angle} $(\varphi_1,\dots,\varphi_m)$ if the $m$-tuple
$(\varphi_1,\dots,\varphi_m)$ is independent of the unit vector
$\xi\in\g{w}^\perp$.

If $\g{v}=\C^n$ and $\g{z}=\R$, then the complex structure of $\C^n$
is $J=J_1$. For a given subspace $\g{w}$ of $\C^n$, we denote $F=F_1$
and $P=P_1$, and we define $\bar{F}\xi=F\xi/\lVert F\xi\rVert$ if
$F\xi\neq 0$. We will need the following result from~\cite[Lemma
2]{BB01}:

\begin{lemma}\label{lemma:BB}
Let $\g{w}^\perp$ be some linear subspace of $\C^n$, and
$\xi\in\g{w}^\perp$ a unit vector with K\"{a}hler angle $\varphi\in
(0,\pi/2)$. Then, there exists a unique vector $\eta\in\C^n\ominus
\C\xi$ such that $\bar{F}\xi=\cos(\varphi)J\xi+\sin(\varphi)J\eta$.
\end{lemma}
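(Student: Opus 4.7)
The plan is to solve the proposed equation explicitly for $\eta$ and then verify that the resulting vector has the required properties. Since $\sin(\varphi)\neq 0$ by hypothesis and $J$ is a linear isomorphism of $\C^n$ with $J^2=-\id$, the equation $\bar{F}\xi=\cos(\varphi)J\xi+\sin(\varphi)J\eta$ uniquely determines $\eta$ as
\[
\eta=-J\biggl(\frac{\bar{F}\xi-\cos(\varphi)J\xi}{\sin(\varphi)}\biggr).
\]
Writing $w:=(\bar{F}\xi-\cos(\varphi)J\xi)/\sin(\varphi)$, I would reduce the entire lemma to showing that $w$ is a unit vector orthogonal to both $\xi$ and $J\xi$. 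Once this is done, $\eta=-Jw$ automatically has unit norm and lies in $\C^n\ominus\C\xi$, because $J$ preserves the inner product and maps $\C\xi$ to itself.

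To carry out this verification, I would first record two scalar identities. Using $J\xi=P\xi+F\xi$ together with the orthogonality of $\g{w}$ and $\g{w}^\perp$, one has $\langle F\xi,J\xi\rangle=\langle F\xi,F\xi\rangle=\cos^2(\varphi)$, and hence $\langle\bar{F}\xi,J\xi\rangle=\cos(\varphi)$. Similarly, since $J$ is skew-symmetric and $P\xi\in\g{w}$ is orthogonal to $\xi\in\g{w}^\perp$, one obtains $\langle F\xi,\xi\rangle=\langle J\xi,\xi\rangle-\langle P\xi,\xi\rangle=0$, and hence $\langle\bar{F}\xi,\xi\rangle=0$. With these two identities the orthogonalities $w\perp\xi$ and $w\perp J\xi$ are immediate, and $\|w\|=1$ follows from a one-line expansion of $\|\bar{F}\xi-\cos(\varphi)J\xi\|^2$ using $\|\bar{F}\xi\|=\|J\xi\|=1$ and the identities above.

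I do not foresee any real obstacle here: the computation is essentially linear algebra in $\C^n$ once the correct formula for $\eta$ has been identified. The only point that requires a bit of attention is the use of the hypothesis $\varphi\in(0,\pi/2)$, since $\cos(\varphi)\neq 0$ is needed for $\bar{F}\xi$ to be defined and $\sin(\varphi)\neq 0$ is needed both so that the formula for $\eta$ makes sense and so that the uniqueness claim holds.
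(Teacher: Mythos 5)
The paper itself does not prove this lemma: it is quoted verbatim from \cite[Lemma~2]{BB01}, so there is no internal argument for your proposal to be measured against. Judged on its own terms, your proof is correct and complete. The key observation that the equation forces $\eta=-J\bigl((\bar{F}\xi-\cos(\varphi)J\xi)/\sin(\varphi)\bigr)$ immediately settles uniqueness, and your three scalar identities $\langle\bar{F}\xi,\xi\rangle=0$, $\langle\bar{F}\xi,J\xi\rangle=\cos(\varphi)$, and $\lVert\bar F\xi\rVert=1$ are all correctly derived from $J\xi=P\xi+F\xi$, the orthogonality $\g{w}\perp\g{w}^\perp$, the skew-symmetry of $J$, and the defining relation $\lVert F\xi\rVert^2=\cos^2(\varphi)$. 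These give exactly $w\perp\xi$, $w\perp J\xi$, and $\lVert w\rVert=1$, and since $J$ is orthogonal and preserves $\C\xi$, the same holds for $\eta=-Jw$. One minor remark: as printed, the lemma only asserts $\eta\in\C^n\ominus\C\xi$ without explicitly saying $\eta$ is a unit vector, but the unit-length conclusion is what is actually used later (in the proof of Theorem~\ref{th:cohom1}, where the resulting $f_i$ are taken to be unit vectors) and your argument delivers it at no extra cost, so including it is the right call.
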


\section{The new examples}\label{sec:Examples}

The new isoparametric hypersurfaces will be tubes around certain
homogeneous submanifolds of a Damek-Ricci space. Thus, in this
section, we proceed first with the construction of these submanifolds
and then determine their extrinsic geometry. This is done in
Subsection~\ref{secFocal}. The geometry of the tubes around these
focal submanifolds is studied in Subsection~\ref{secJacobi}, where
their main properties are given.

\subsection{The focal manifold of the new examples}\label{secFocal}

As we explained above, the new examples are constructed as tubes
around certain homogeneous submanifolds. Each isoparametric family
will have at most one submanifold that is not a hypersurface. This is
the focal submanifold of the family, and we define it in this
subsection.

Let $AN$ be a Damek-Ricci space with Lie algebra
$\g{a}\oplus\g{n}=\g{a}\oplus\g{v}\oplus\g{z}$, where $\dim \g{z}=m$.
Let $\g{w}$ be a proper subspace of $\g{v}$ and define
$\g{w}^\perp=\g{v}\ominus\g{w}$, the orthogonal complement of $\g{w}$
in $\g{v}$. Then,
\[
\g{s}_{\g{w}}=\g{a}\oplus\g{w}\oplus\g{z}
\]
is a solvable Lie subalgebra of $\g{a}\oplus\g{n}$, as one can easily
check from the bracket relations in \S\ref{secDamekRicci}. Let
$S_{\g{w}}$ be the corresponding connected subgroup of $AN$ whose Lie
algebra is $\g{s}_{\g{w}}$. Since $AN$ acts by isometries on itself
and $S_{\g{w}}$ is a subgroup of $AN$, $S_\g{w}$ is also a
homogeneous submanifold of $AN$.

Let $\xi\in\g{w}^\perp$ be a unit normal vector field along the
submanifold $S_\g{w}$. Let $\{Z_1,\dots,Z_m\}$ be an orthonormal
basis of $\g{z}$ satisfying the properties of Theorem~\ref{thGKA}. In
order to simplify the notation, for each $i\in\{1,\dots,m\}$, we set
$J_i$, $P_i$ and $F_i$ instead of $J_{Z_i}$, $P_{Z_i}$ and $F_{Z_i}$,
respectively. It is convenient to define
\[
\mi=\max\{i:\varphi_i=0\}+1\quad \text{ and }\quad
\mii=\min\{i:\varphi_i=\pi/2\}-1,\] where $\varphi_i$ is the K\"ahler
angle of $\xi$ with respect to $Z_i\in \g{z}$ (set $\mi=1$ if
$\varphi_i>0$ for all $i$, and $\mii=m$ if $\varphi_i<\pi/2$ for all
$i$). Thus, $\mi$ is the first index $i$ for which $\varphi_i>0$, and
$\mii$ is the last index $i$ for which $\varphi_i<\pi/2$. It might of
course happen that $\mi>m$ if $\varphi_i=0$ for all $i$, or $\mii<1$
if $\varphi_i=\pi/2$ for all $i$, in which case some of the equations
that follow are just disregarded.

With this notation we can now define
\[
\bar{P}_i\xi=\frac{1}{\sin(\varphi_i)}P_i\xi, \text{ for }i=\mi,\dots, m,
\quad\text{ and }\quad
\bar{F}_i\xi=\frac{1}{\cos(\varphi_i)}F_i\xi, \text{ for } i=1,\dots, \mii.
\]
Since $\xi$ is of unit length, so are $\bar{P}_i\xi$ and
$\bar{F}_i\xi$ whenever they exist. Moreover, by Theorem~\ref{thGKA},
the set $\{\bar{P}_{\mi}\xi,\dots, \bar{P}_m\xi, \bar{F}_1\xi,\dots,
\bar{F}_{\mii}\xi\}$ constitutes an orthonormal system of vector
fields along $S_\g{w}$, the first $m-\mi+1$ of which being tangent,
and the rest normal to~$S_\g{w}$.

We are now interested in calculating the shape operator $\mathcal{S}$
of $S_\g{w}$. Recall that the shape operator $\mathcal{S}_\xi$ of
$S_{\g{w}}$ with respect to a unit normal $\xi\in\nu S_{\g{w}}$ is
defined by $\mathcal{S}_\xi X=-(\nabla_X\xi)^\top$, for any $X\in
TS_{\g{w}}$, and where $(\cdot)^\top$ denotes orthogonal projection
onto the tangent space. The expression for the Levi-Civita connection
of the Damek-Ricci space $AN$ allows us to calculate the shape
operator of $S_\g{w}$ for left-invariant vector fields:
\begin{align*}
\mathcal{S}_\xi B & =0,
\\
\mathcal{S}_\xi Z_i &
=\frac{1}{2}P_i\xi=0,\quad\text{ if $i=1,\dots, \mi-1$},
\\
\mathcal{S}_\xi Z_i &
=\frac{1}{2}P_i\xi=\frac{1}{2}\sin(\varphi_i)\bar{P}_i\xi,\quad\text{ if $i=\mi,\dots, m$},
\\
\mathcal{S}_\xi \bar{P}_i\xi &
= \frac{1}{2}[\xi, \bar{P}_i\xi]^\top
= \frac{1}{2}\sum_{j=1}^m \langle J_j\xi, \bar{P}_i\xi\rangle Z_j
=\frac{1}{2}\sin(\varphi_i)Z_i, \quad \text{ if $i=\mi,\dots, m$},
\\
\mathcal{S}_\xi U & = \frac{1}{2}[\xi, U]^\top
= \frac{1}{2}\sum_{j=1}^m \langle J_j\xi, U\rangle Z_j=0 ,
\quad \text{ if $U\in\g{w}\ominus\Bigl(\bigoplus_{i=\mi}^m \R\bar{P}_i\xi\Bigr)$}.
\end{align*}

From the expressions above, we obtain that the principal curvatures
of $S_\g{w}$ with respect to the unit normal vector $\xi$ are
\[
0, \quad \frac{1}{2}\sin\varphi_i,\quad \text{ and }\quad-\frac{1}{2}\sin\varphi_i,
\]
and their corresponding principal spaces are, respectively,
\begin{align*}
\g{a}\oplus\Bigl(\g{w}\ominus\Bigl(\bigoplus_{j=\mi}^m \R\bar{P}_j\xi\Bigr)\Bigr)
\oplus\Bigl(\bigoplus_{j=1}^{\mi-1}Z_j\Bigr),
\quad  \R(Z_i+\bar{P}_i\xi), \quad\text{ and }\quad \R(Z_i-\bar{P}_i\xi),
\end{align*}
where $i=\mi,\dots, m$. In any case, the submanifold $S_\g{w}$ is
minimal (even austere) and, if $\dim\g{w}^\perp=1$, then $S_\g{w}$ is
a minimal hypersurface of $AN$.

\begin{remark}
We emphasize that, although the dependance on $\xi$ is not made
explicit in the notation, $(\varphi_1,\dots,\varphi_m)$,
$\{Z_1,\dots,Z_m\}$, $\mi$, and $\mii$ do depend on $\xi$.
\end{remark}

\subsection{Solving the Jacobi equation}\label{secJacobi}

Denote by $M^r$ the tube of radius $r$ around the submanifold
$S_\g{w}$ that was described in the previous section. We claim that,
for every $r>0$, $M^r$ is an isoparametric hypersurface which has, in
general, nonconstant principal curvatures.

In order to show that $M^r$ has the properties mentioned above, we
will make use of Jacobi field theory. The main step of our approach
is to write down the Jacobi equation along a geodesic normal to
$S_\g{w}$ and to solve some initial value problems for this equation.
We emphasize that the method used in~\cite{DRDV10} to write down the
Jacobi equation corresponding to complex hyperbolic space $\C H^n$
(which consisted in expressing the Jacobi fields in terms of parallel
translations) is not feasible here. Thus we will express the Jacobi
fields in terms of left-invariant vector fields. The relevance of
Theorem~\ref{thGKA} will become clear with this method.

Given a unit speed geodesic $\gamma$ in the Damek-Ricci space $AN$, a
vector field $\zeta$ along $\gamma$ is called a Jacobi vector field
if it satisfies the Jacobi equation in $AN$ along $\gamma$, namely
\[
\zeta''+R(\zeta,\dot{\gamma})\dot{\gamma}=0,
\]
where $\dot\gamma$ is the tangent vector of $\gamma$, and $'$ stands
for covariant differentiation along the geodesic $\gamma$.

Let $p\in S_\g{w}$ be an element of the submanifold, and $\xi\in\nu_p
S_\g{w}$ a unit normal vector at $p$. Let $\gamma$ be the geodesic of
$AN$ such that $\gamma(0)=p$ and $\dot{\gamma}(0)=\xi$. Denote by
$\dot{\gamma}(t)^\perp$ the orthogonal complement of
$\dot{\gamma}(t)$ in $T_{\gamma(t)}AN$ and by $\mathcal{S}$ the shape
operator of the submanifold $S_\g{w}$.  We denote by $\zeta_v$ the
Jacobi vector field with initial conditions
\begin{equation}\label{initialConditions}
\zeta_v(0)=v^\top,\, \zeta_v'(0)=-\mathcal{S}_\xi v^\top +v^\perp,
\text{ where $v=v^\top+v^\perp$, $v^\top\in\g{s}_\g{w}$, and
$v^\perp\in \g{w}^\perp\ominus\R\xi$}.
\end{equation}
We define the $\textrm{Hom}\bigl((\g{a}\oplus\g{n})\ominus\R
\xi,\dot{\gamma}^\perp\bigr)$-valued tensor fields $C$ and $E$ along
$\gamma$ satisfying $C(r) v=\zeta_v(r)$ and $E(r)
v=(\zeta_v'(r))^\top$, for every $r\in\R$ and every left-invariant
vector field $v\in(\g{a}\oplus\g{n})\ominus\R \xi$, where now
$(\cdot)^\top$ denotes the projection onto $\dot{\gamma}^\perp$.
Standard Jacobi field theory ensures that if $C(r)$ is nonsingular
for every unit $\xi\in\nu S_\g{w}$, then the tube $M^r$ of radius $r$
around $S_\g{w}$ is a hypersurface of $AN$. Moreover, in this case,
the shape operator $\mathcal{S}^r$ of $M^r$ at the point $\gamma(r)$
with respect to the unit vector $-\dot{\gamma}(r)$ is given by
$\mathcal{S}^r\zeta_v(r)=(\zeta_v'(r))^\top$, for every
$v\in(\g{a}\oplus\g{n})\ominus\R \xi$, that is, $\mathcal{S}^r =
E(r)C(r)^{-1}$.

Therefore, our objective in what follows is to determine an explicit
expression for the Jacobi fields whose initial conditions are given
by~\eqref{initialConditions}. In order to achieve this goal, we fix
here and henceforth an orthonormal basis $\{Z_1,\dots,Z_m\}$ of
$\g{z}$ satisfying the properties of Theorem~\ref{thGKA}, and let
$(\varphi_1,\dots,\varphi_m)$ be the corresponding generalized
K\"ahler angle of $\xi$ with respect to $\g{w}^\perp$. Recall that
$(\varphi_1,\dots,\varphi_m)$ and $\{Z_1,\dots,Z_m\}$ depend on
$\xi$, but we remove this dependence from the notation for the sake
of simplicity. Let $\{U_1,\dots, U_l\}$ be an orthonormal basis of
$\g{w}\ominus\bigl(\oplus_{j=\mi}^m \R\bar{P}_j\xi\bigr)$, and let
$\{\eta_1,\dots,\eta_h\}$ be an orthonormal basis of
$\g{w}^\perp\ominus\bigl(\R\xi\oplus\bigl(\oplus_{j=1}^{\mii}\bar{F}_j\xi\bigr)\bigr)$.
Then the set
\begin{equation}\label{basis}
\Bigl\{\,\xi, B, U_1,\dots, U_l,\eta_1,\dots, \eta_h, \bar{P}_{\mi}\xi,
\dots, \bar{P}_m\xi,\bar{F}_1\xi,\dots, \bar{F}_{\mii}\xi, Z_1, \dots, Z_m\,\Bigr\}
\end{equation}
constitutes an orthonormal basis of left-invariant vector fields of
$\g{a}\oplus\g{n}$.

The main step of the proof of Theorem~\ref{thMain} is the following

\begin{proposition}\label{prop:JacobiSolution}
With the notation as above we have
\begin{align*}
\zeta_{B}(t)={}& B+ \sinh\Bigl(\frac{t}{2}\Bigr)\xi ,
\\
\zeta_{U_i}(t)={}& \cosh\Bigl(\frac{t}{2}\Bigr) U_i, \quad i=1,\dots, l,
\\
\zeta_{\eta_i}(t)={}& 2\sinh\Bigl(\frac{t}{2}\Bigr) \eta_i, \quad i=1,\dots, h,
\\
\zeta_{\bar{P}_i\xi}(t)={}& \cosh\Bigl(\frac{t}{2}\Bigr) \bar{P}_i\xi
- \sin(\varphi_i)\sinh(t) Z_i, \quad i=\mi,\dots, m,
\\
\zeta_{\bar{F}_i\xi}(t)={}& 2\sinh\Bigl(\frac{t}{2}\Bigr) \bar{F}_i\xi
-2\cos(\varphi_i)\sinh^2\Bigl(\frac{t}{2}\Bigr)Z_i, \quad i=1,\dots, \mii,
\\
\zeta_{Z_i}(t)={}& \sinh\Bigl(\frac{t}{2}\Bigr) {F}_i\xi
+\Bigl(1+\sin^2(\varphi_i)\sinh^2\Bigl(\frac{t}{2}\Bigr)\Bigr)Z_i, \quad i=1,\dots, m.
\end{align*}
\end{proposition}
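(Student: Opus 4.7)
The goal is to exhibit, for every $v$ in the basis~\eqref{basis} that is orthogonal to $\xi$, an explicit vector field along $\gamma$ that satisfies $\zeta''+R(\zeta,\dot\gamma)\dot\gamma=0$ together with the initial conditions~\eqref{initialConditions}. Since Jacobi fields are determined by their value and covariant derivative at $t=0$, my plan is to write down the formulae stated in the proposition and verify them directly.

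The first step is to express $\dot\gamma(t)$ in the left-invariant frame. From the Levi-Civita formula in \S\ref{secDamekRicci} one reads off $\nabla_B B=\nabla_B\xi=0$, $\nabla_\xi B=-\tfrac{1}{2}\xi$ and $\nabla_\xi\xi=\tfrac{1}{2}B$, so the subspace $\R B\oplus\R\xi$ is preserved by the connection. The ansatz $\dot\gamma(t)=a(t)B+b(t)\xi$ is therefore compatible with $\nabla_{\dot\gamma}\dot\gamma=0$ and reduces that equation to the ODE system $\dot a=-\tfrac{1}{2}b^{2}$, $\dot b=\tfrac{1}{2}ab$; together with $a(0)=0$, $b(0)=1$ and unit speed, this integrates to $a(t)=-\tanh(t/2)$ and $b(t)=\sech(t/2)$.

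The next step is to tabulate, once and for all, the vectors $\nabla_B X$ and $\nabla_\xi X$ for every $X$ in the basis~\eqref{basis}, again from the Levi-Civita formula. Here Theorem~\ref{thGKA} is indispensable: because $\{Z_1,\dots,Z_m\}$ diagonalizes the form $Q_\xi$, for each fixed $i$ the triple $\{Z_i,\bar{P}_i\xi,\bar{F}_i\xi\}$ is orthonormal and its span is stable under $\nabla_\xi$ modulo the $(B,\xi)$-plane; similarly, each of the lines $\R U_i$ and $\R\eta_j$ is individually stable. Propagating this along $\gamma$ via $\nabla_{\dot\gamma}=a(t)\nabla_B+b(t)\nabla_\xi$ shows that the Jacobi operator preserves each block $\R U_i$, $\R\eta_j$ and $\R Z_i\oplus\R\bar{P}_i\xi\oplus\R\bar{F}_i\xi$, which is precisely why each $\zeta_v$ in the proposition is a combination of at most two basis vectors with explicit hyperbolic coefficients.

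The verification then proceeds block by block. For each candidate I would check $\zeta_v(0)=v^\top$ and $\zeta_v'(0)=-\mathcal{S}_\xi v^\top+v^\perp$ against the shape-operator formulae of \S\ref{secFocal}, differentiate $\zeta_v$ twice using the tabulated covariant derivatives, and compare with $R(\zeta_v,\dot\gamma)\dot\gamma$ computed from $R(W_1,W_2)W_3=\nabla_{W_1}\nabla_{W_2}W_3-\nabla_{W_2}\nabla_{W_1}W_3-\nabla_{[W_1,W_2]}W_3$; the equation then closes by elementary identities such as $\cosh^2(t/2)-\sinh^2(t/2)=1$. The main obstacle is not conceptual but purely bookkeeping: within each three-dimensional block the curvature term couples $Z_i$, $\bar{P}_i\xi$ and $\bar{F}_i\xi$ through the $J$-action and the H-type identities of \S\ref{sec:Heisenberg}, and many off-diagonal terms of the form $\langle F_{Z_i}\xi,F_{Z_j}\xi\rangle$ and $\langle P_{Z_i}\xi,P_{Z_j}\xi\rangle$ with $i\ne j$ appear along the way. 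These are all annihilated by the diagonalization of $Q_\xi$ supplied by Theorem~\ref{thGKA}, which is exactly why that theorem had to be established first; without it the closed forms listed in the proposition simply could not hold.
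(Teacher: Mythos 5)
Your proposal is correct and follows essentially the same verification strategy as the paper: write down the stated formulae and check the Jacobi equation together with the initial conditions~\eqref{initialConditions}. The only notable differences are cosmetic — you re-derive $\dot\gamma(t)=\sech(t/2)\xi-\tanh(t/2)B$ by solving the geodesic ODE in the $\nabla$-invariant plane $\R B\oplus\R\xi$ where the paper cites \cite{BTV95}, and you make the invariance of the blocks $\R U_i$, $\R\eta_j$ and $\R Z_i\oplus\R\bar P_i\xi\oplus\R\bar F_i\xi$ under the Jacobi operator explicit as an organizing device (the paper uses the same decomposition later when assembling $C(r)$ and $E(r)$ but does not spell it out in the proof of the proposition).
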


\begin{proof}
In order to prove this result it suffices to take the expressions
above and show that they satisfy the Jacobi equation and the initial
conditions~\eqref{initialConditions}. The calculations are long so we
will just show an example of how they are performed for
$\zeta_{Z_i}$.

First of all, recall that $p\in S_\g{w}$, and $\xi\in\nu_p S_\g{w}$
is a unit normal vector at $p$. The geodesic $\gamma$ satisfies
$\gamma(0)=p$ and $\dot{\gamma}(0)=\xi$. By \cite[\S 4.1.11, Theorem
2]{BTV95} we know that $\dot{\gamma}(t)=\sech\bigl(\frac{t}{2}\bigr)
\xi -\tanh\bigl(\frac{t}{2}\bigr) B$, for every $t\in\R$, where $\xi$
and $B$ are considered as left-invariant vector fields on $AN$.
Actually, in \cite{BTV95} this result is stated only for the case
when $p=e$ is the identity element of $AN$. However, since
$\gamma_p=L_p\circ\gamma_e$, the homogeneity of $S_\g{w}$ implies
that
\begin{equation}\label{normalGeodesic}
\dot{\gamma}_p(t)=L_{p*}\dot{\gamma}_e(t)
=\sech\Bigl(\frac{t}{2}\Bigr) \xi -\tanh\Bigl(\frac{t}{2}\Bigr) B,
\end{equation}
for every $t\in\R$, where $L_p$ denotes the left multiplication by
$p$ in the group $AN$, $\gamma_e$ is the normal geodesic through the
identity element $e$ with initial velocity $\xi$, and $\gamma_p$ is
the normal geodesic through the point $p\in AN$ with initial velocity
$L_{p*}\xi=\xi$.

It is easy to check that $\zeta_{Z_i}(0)=Z_i$, which is a tangent
vector to $\g{s}_{\g{w}}$. Now we have to calculate $\zeta_{Z_i}'$.
By the Leibniz rule we get
\begin{equation}\label{eq:zetap}
\begin{aligned}
\zeta_{Z_i}'(t)
=\nabla_{\dot\gamma(t)}\zeta_{Z_i}
={}&\frac{1}{2}\cosh\Bigl(\frac{t}{2}\Bigr)F_i\xi
+\sinh\Bigl(\frac{t}{2}\Bigr)\nabla_{\dot\gamma(t)}F_i\xi
+\sin^2(\varphi_i)\sinh\Bigl(\frac{t}{2}\Bigr)\cosh\Bigl(\frac{t}{2}\Bigr)Z_i\\
&{}+\Bigl(1+\sin^2(\varphi_i)\sinh^2\Bigl(\frac{t}{2}\Bigr)\Bigr)
\nabla_{\dot\gamma(t)}Z_i.
\end{aligned}
\end{equation}
Using~\eqref{normalGeodesic}, and the formula for the Levi-Civita
connection in~\S\ref{secDamekRicci}, we obtain
\begin{align*}
\nabla_{\dot\gamma(t)}F_i\xi
&{}=\sech\Bigl(\frac{t}{2}\Bigr)\nabla_{\xi}F_i\xi
-\tanh\Bigl(\frac{t}{2}\Bigr)\nabla_{B}F_i\xi
=\frac{1}{2}\sech\Bigl(\frac{t}{2}\Bigr)[\xi,F_i\xi],\quad
\text{ and}\\
\nabla_{\dot\gamma(t)}Z_i
&{}=\sech\Bigl(\frac{t}{2}\Bigr)\nabla_{\xi}Z_i
-\tanh\Bigl(\frac{t}{2}\Bigr)\nabla_{B}Z_i
=-\frac{1}{2}\sech\Bigl(\frac{t}{2}\Bigr)J_{Z_i}\xi.
\end{align*}
Now, for $j\in\{1,\dots,m\}$, the bracket relations
from~\S\ref{sec:Heisenberg} yield $\langle[\xi,F_i\xi],Z_j\rangle
=\langle J_{Z_j}\xi,F_i\xi\rangle =\langle F_j\xi,F_i\xi\rangle
=\cos^2(\varphi_i)\delta_{ij}$, where $\delta$ is the Kronecker
delta. Thus, $[\xi,F_i\xi]=\cos^2(\varphi_i)Z_i$. Using
$J_{Z_i}\xi=P_i\xi+F_i\xi$, and inserting these expressions
into~\eqref{eq:zetap} we obtain
\[
\begin{aligned}
\zeta_{Z_i}'(t)=
&{}-\frac{1}{2}\sech\Bigl(\frac{t}{2}\Bigr)
\Bigl(1+\sin^2(\varphi_i)\sinh^2\Bigl(\frac{t}{2}\Bigr)\Bigr){P}_i\xi
+\frac{1}{2}\cos^2(\varphi_i)\sinh\Bigl(\frac{t}{2}\Bigr)
\tanh\Bigl(\frac{t}{2}\Bigr){F}_i\xi\\
{}&{}+\frac{1}{2}\Bigl(\sin^2(\varphi_i)\sinh(t)+\cos^2(\varphi_i)
\tanh\Bigl(\frac{t}{2}\Bigr)\Bigr)Z_i, \quad i=1,\dots, m.
\end{aligned}
\]
From this expression, and the shape operator of $S_{\g{w}}$ obtained
in \S\ref{secFocal} we easily get
$\zeta_{Z_i}'(0)=-\frac{1}{2}P_i\xi=-\frac{1}{2}\sin\varphi_i\bar{P}_i\xi
=-\mathcal{S}_\xi Z_i$, so the initial
conditions~\eqref{initialConditions} are satisfied.

The very same approach can be used to calculate $\zeta_{Z_i}''$. We
omit the explicit calculations here, which are very similar to those
shown above, and give the result:
\begin{align*}
\zeta_{Z_i}''(t)={}
&{}-\frac{3}{4}\sin^2(\varphi_i)\sinh\Bigl(\frac{t}{2}\Bigr)P_i\xi
+\frac{1}{16}(6\cos(2\varphi_i)-2)\sinh\Bigl(\frac{t}{2}\Bigr)F_i\xi\\
&{}+\frac{1}{4}\Bigl(\cosh(t)-2\cos(2\varphi_i)\sinh^2\Bigl(\frac{t}{2}\Bigr)\Bigr)Z_i.
\end{align*}

Finally, we need to calculate
$R(\zeta_{Z_i}(t),\dot\gamma(t))\dot\gamma(t))$. We have the
following identities for the curvature tensor, where $U$,
$V\in\g{v}$, and $Z\in\g{z}$ are of unit length (for the complete
formula, see \cite[\S 4.1.7]{BTV95}):
\begin{align*}
R(U,Z)B &= \frac{1}{4}J_Z U,& R(B,Z)B &= Z,
&R(U,V)V &= \frac{1}{4}\bigl(\langle U,V\rangle V- U+3 J_{[U,V]}V\bigr),\\
R(U,V)B &=\frac{1}{2}[U,V],\quad&
R(B,U)B &= \frac{1}{4}U,\quad&
R(U,B)V &= \frac{1}{4}\langle U,V\rangle B +\frac{1}{4}[U,V],\\
R(B,Z)U &= \frac{1}{2}J_Z U,&
R(U,Z)U &= \frac{1}{4} Z,& R(U,B)U &= \frac{1}{4} B.
\end{align*}

Using the properties of the curvature tensor and the formulas above,
we get after some calculations
$\zeta_{Z_i}''+R(\zeta_{Z_i},\dot\gamma)\dot\gamma=0$ as we wanted to
show.
\end{proof}

Our aim in what follows is to finish the calculation of the shape
operator $\mathcal{S}^r$ of $M^r$ at $\gamma(r)$. Recall that we
first need to calculate
$C(r)\colon(\g{a}\oplus\g{n})\ominus\R\xi\to\dot\gamma^\perp=T_{\gamma(r)}M^r$,
$v\mapsto\zeta_v(r)$. In order to describe this operator we consider
the following distributions on $\g{a}\oplus\g{n}$:
\begin{align*}
\g{U}&=\oplus_{j=1}^l\R U_j,\qquad
&\g{F}_i&=\R \bar{F}_i\xi\oplus\R Z_i,&& i=1,\dots,\mi-1,\\
\g{H}&=\oplus_{j=1}^h\R \eta_j,
&\g{M}_i&=\R \bar{P}_i\xi\oplus\R\bar{F}_i\xi\oplus\R Z_i,&& i=\mi,\dots,\mii,\\
&&\g{P}_i&=\R \bar{P}_i\xi\oplus\R Z_i,&& i=\mii+1,\dots,m.
\end{align*}
Then, we can decompose
\begin{align*}
&(\g{a}\oplus\g{n})\ominus\R\xi
=\R B\oplus\g{U}\oplus\g{H}
\oplus\Bigl(\bigoplus_{i=1}^{\mi-1}\g{F}_i\Bigr)
\oplus\Bigl(\bigoplus_{i=\mi}^{\mii}\g{M}_i\Bigr)
\oplus\Bigl(\bigoplus_{i=\mii+1}^{m}\g{P}_i\Bigr),\text{ and}\\
&T_{\gamma(r)}M^r
=\R\Bigl(\sech\Bigl(\frac{r}{2}\Bigr)B+\tanh\Bigl(\frac{r}{2}\Bigr)\xi\Bigr)
\oplus\g{U}\oplus\g{H}
\!\oplus\!\Bigl(\bigoplus_{i=1}^{\mi-1}\g{F}_i\Bigr)
\!\oplus\!\Bigl(\bigoplus_{i=\mi}^{\mii}\g{M}_i\Bigr)
\!\oplus\!\Bigl(\bigoplus_{i=\mii+1}^{m}\g{P}_i\Bigr).
\end{align*}
With respect to these decompositions, a direct application of
Proposition~\ref{prop:JacobiSolution} shows that the operator $C(r)$
can be written as
\begin{align*}
C(r)={}&\cosh\Bigl(\frac{r}{2}\Bigr)\id_{l+1}
\oplus\Bigl(2\sinh\Bigl(\frac{r}{2}\Bigr)\id_{h}\Bigr)
\oplus\left(\bigoplus_{i=1}^{\mi-1}
\begin{pmatrix}
2\sinh(\frac{r}{2}) &\sinh(\frac{r}{2})\\
-2\sinh^2(\frac{r}{2}) & 1
\end{pmatrix}
\right)\\
&{}\oplus\left(\bigoplus_{i=\mi}^{\mii}
\begin{pmatrix}
\cosh(\frac{r}{2}) & 0 & 0\\
 0 &  2\sinh(\frac{r}{2}) & \cos(\varphi_i)\sinh(\frac{r}{2})\\
-\sin(\varphi_i)\sinh(r) & -2\cos(\varphi_i)\sinh^2(\frac{r}{2})
& 1+\sin^2(\varphi_i)\sinh^2(\frac{r}{2})
\end{pmatrix}
\right)\\
&{}\oplus\left(\bigoplus_{i=\mii+1}^{m}
\begin{pmatrix}
\cosh(\frac{r}{2}) & 0\\
-\sinh(r) & \cosh^2(\frac{r}{2})
\end{pmatrix}
\right).
\end{align*}

In particular, the determinant of $C(r)$ is
\[
\det(C(r))=2^{h+\mii}\Bigl(\cosh\Bigl(\frac{r}{2}\Bigr)\Bigr)^{2+l+3m-\mi}
\Bigl(\sinh\Bigl(\frac{r}{2}\Bigr)\Bigr)^{h+\mii},
\]
which is nonzero for every $r> 0$, and hence, the tubes $M^r$ around
$S_\g{w}$ are hypersurfaces for every $r>0$.

The next step is to consider the operator
$E(r)\colon(\g{a}\oplus\g{n})\ominus\R\xi\to T_{\gamma(r)}M^r$,
$v\mapsto(\zeta_v'(r))^\top$. To that end, we need to calculate the
covariant derivative along $\gamma$ of the Jacobi vector fields given
in Proposition~\ref{prop:JacobiSolution}. We omit the explicit
calculations, which follow the procedure described in the proof of
Proposition~\ref{prop:JacobiSolution}, and give $E(r)$ with respect
to the same decomposition as above:
\begin{align*}
E(r)={}&\frac{1}{2}\sinh\Bigl(\frac{r}{2}\Bigr)\id_{l+1}
\oplus\Bigl(\cosh\Bigl(\frac{r}{2}\Bigr)\id_{h}\Bigr)
\oplus\left(\bigoplus_{i=1}^{\mi-1}
\begin{pmatrix}
\cosh(r)\sech(\frac{r}{2}) &\frac{1}{2}\sinh(\frac{r}{2})\tanh(\frac{r}{2})\\
\tanh(\frac{r}{2})-\sinh(r) & \frac{1}{2}\tanh(\frac{r}{2})
\end{pmatrix}
\right)\\
&{}\oplus\left(\bigoplus_{i=\mi}^{\mii}
A_i(r)\right)
\oplus\left(\bigoplus_{i=\mii+1}^{m}
\begin{pmatrix}
\frac{3}{2}\sinh(\frac{r}{2}) & -\frac{1}{2}\cosh(\frac{r}{2})\\[0.5ex]
\frac{1}{2}-\cosh(r) & \frac{1}{2}\sinh(r)
\end{pmatrix}
\right),
\end{align*}
where $A_i(r)$ is
{\Small
\[
\frac{1}{2}\begin{pmatrix}
(2-\cos(2\varphi_i))\sinh(\frac{r}{2}) &
2\sin(2\varphi_i)\csch(r)\sinh^3(\frac{r}{2})
&  -\sin(\varphi_i)\sech(\frac{r}{2})(1+\sin^2(\varphi_i)\sinh^2(\frac{r}{2}))\\[0.5ex]
\sin(2\varphi_i)\sinh(\frac{r}{2})
& 2\cosh(\frac{r}{2})(1+\cos^2(\varphi_i)\tanh^2(\frac{r}{2}))
& \cos^3(\varphi_i)\sinh(\frac{r}{2})\tanh(\frac{r}{2})\\[0.5ex]
\sin(\varphi_i)(1-2\cosh(r)) & 2\cos(\varphi_i)(\tanh(\frac{r}{2})-\sinh(r))
& \sin^2(\varphi_i)\sinh(r)+\cos^2(\varphi_i)\tanh(\frac{r}{2})
\end{pmatrix}.
\]}

Using the expression $\mathcal{S}^r=E(r)C(r)^{-1}$ and some tedious
but elementary calculations we get to the main results of this
section.

\begin{proposition}
The shape operator $\mathcal{S}^r$ of the tube $M^r$ around the
homogeneous submanifold $S_{\g{w}}$ of $AN$ with respect to the
decomposition
$T_{\gamma(r)}M^r=\R(\sech(\frac{r}{2})B+\tanh(\frac{r}{2})\xi)
\oplus\g{U}\oplus\g{H} \oplus(\oplus_{i=1}^{\mi-1}\g{F}_i)
\oplus(\oplus_{i=\mi}^{\mii}\g{M}_i)\oplus(\oplus_{i=\mii+1}^{m}\g{P}_i)$
is given by
\begin{align*}
\mathcal{S}^r={}&\Bigl(\frac{1}{2}\tanh\Bigl(\frac{r}{2}\Bigr)\id_{l+1}\Bigr)
\oplus\Bigl(\frac{1}{2}\coth\Bigl(\frac{r}{2}\Bigr)\id_{h}\Bigr)
\oplus\left(\bigoplus_{i=1}^{\mi-1}
\begin{pmatrix}
\frac{1}{2}\coth(\frac{r}{2}) &-\frac{1}{2}\sech(\frac{r}{2})\\
-\frac{1}{2}\sech(\frac{r}{2}) &\tanh(\frac{r}{2})
\end{pmatrix}\right)\\
&{}\oplus\left(\bigoplus_{i=\mi}^{\mii}
\begin{pmatrix}
\frac{1}{2}\tanh(\frac{r}{2}) & 0
& -\frac{1}{2}\sin(\varphi_i)\sech(\frac{r}{2})\\[0.5ex]
0 & \frac{1}{2}\coth(\frac{r}{2})
& -\frac{1}{2}\cos(\varphi_i)\sech(\frac{r}{2})\\[0.5ex]
-\frac{1}{2}\sin(\varphi_i)\sech(\frac{r}{2})
& -\frac{1}{2}\cos(\varphi_i)\sech(\frac{r}{2}) & \tanh(\frac{r}{2})
\end{pmatrix}\right)\\
&{}\oplus\left(\bigoplus_{i=\mii+1}^{m}
\begin{pmatrix}
\frac{1}{2}\tanh(\frac{r}{2}) & -\frac{1}{2}\sech(\frac{r}{2})\\[0.5ex]
-\frac{1}{2}\sech(\frac{r}{2})& \tanh(\frac{r}{2})
\end{pmatrix}\right).
\end{align*}
\end{proposition}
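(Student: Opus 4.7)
The plan is straightforward: apply the already-established identity $\mathcal{S}^r = E(r)C(r)^{-1}$ and compute directly. Since both operators have been written in block-diagonal form with respect to a common decomposition of $(\g{a}\oplus\g{n})\ominus\R\xi$, the inversion and subsequent multiplication factor over blocks, so the problem reduces to handling each block separately: two scalar blocks (on $\R B\oplus\g{U}$ and on $\g{H}$), $2\times 2$ blocks on each $\g{F}_i$ and on each $\g{P}_i$, and a $3\times 3$ block on each $\g{M}_i$.

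The scalar cases are immediate: $\frac{1}{2}\sinh(r/2)/\cosh(r/2)=\frac{1}{2}\tanh(r/2)$ and $\cosh(r/2)/(2\sinh(r/2))=\frac{1}{2}\coth(r/2)$ match the claim. For the two $2\times 2$ blocks I would first compute the determinants of the $C(r)$ blocks using $\sinh(r)=2\sinh(r/2)\cosh(r/2)$, obtaining $2\sinh(r/2)\cosh^2(r/2)$ for $\g{F}_i$ and $\cosh^3(r/2)$ for $\g{P}_i$. Explicit inversion via the $\mathrm{adj}/\det$ formula, multiplication by the corresponding $E(r)$ block, and routine simplification using the half-angle identities $\cosh(r)=\cosh^2(r/2)+\sinh^2(r/2)=1+2\sinh^2(r/2)$ collapse every entry to one of $\frac{1}{2}\tanh(r/2)$, $\frac{1}{2}\coth(r/2)$, $\tanh(r/2)$, or $-\frac{1}{2}\sech(r/2)$.

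The main obstacle is the $3\times 3$ block on each $\g{M}_i$, which mixes $\sin(\varphi_i)$ and $\cos(\varphi_i)$. Expanding the determinant of the $C(r)$ block along the first row (which has two zero entries) yields $2\sinh(r/2)\cosh(r/2)\bigl[1+(\sin^2(\varphi_i)+\cos^2(\varphi_i))\sinh^2(r/2)\bigr]=2\sinh(r/2)\cosh^3(r/2)$. This cancellation of $\varphi_i$ from the determinant is the key structural fact: it explains why the shape operator depends on $\varphi_i$ only through the two off-diagonal entries $-\frac{1}{2}\sin(\varphi_i)\sech(r/2)$ and $-\frac{1}{2}\cos(\varphi_i)\sech(r/2)$. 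Building the $3\times 3$ adjugate and forming its product with $A_i(r)$ is lengthy, and the cleanest simplifications go through $\csch(r)\sinh^3(r/2)=\sinh^2(r/2)/(2\cosh(r/2))$, the double-angle formulas $\sin(2\varphi_i)=2\sin(\varphi_i)\cos(\varphi_i)$ and $\cos(2\varphi_i)=1-2\sin^2(\varphi_i)$, and repeatedly $\sin^2(\varphi_i)+\cos^2(\varphi_i)=1$.

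As a sanity check throughout, observe that the displayed basis of $T_{\gamma(r)}M^r$ is orthonormal: the vector $\sech(r/2)B+\tanh(r/2)\xi$ has unit length because $\sech^2(r/2)+\tanh^2(r/2)=1$, and the remaining basis vectors are left-invariant and hence orthonormal. Consequently $\mathcal{S}^r$ must be represented by a symmetric matrix, and the claimed expression is indeed symmetric block by block; verifying the symmetry of each of my computed blocks provides an independent check that no term has been dropped or miscombined.
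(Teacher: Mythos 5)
Your proposal takes exactly the same route as the paper, which offers no written proof of this Proposition beyond the remark that it follows from $\mathcal{S}^r = E(r)C(r)^{-1}$ by ``tedious but elementary calculations''; you carry out those block-by-block inversions and multiplications in the obvious way, and the determinant cancellation you flag in the $\g{M}_i$ block is indeed the structural reason the final expression is so clean. The symmetry sanity check is a valid and useful addition: the displayed basis of $T_{\gamma(r)}M^r$ is orthonormal (since $\sech^2(r/2)+\tanh^2(r/2)=1$ and the remaining vectors come from the orthonormal left-invariant basis~\eqref{basis}), so $\mathcal{S}^r$ must be symmetric in that basis, and each of the claimed blocks is.
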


As a consequence, we immediately get

\begin{corollary}
The mean curvature $\mathcal{H}^r$ of the tube $M^r$ at the point
$\gamma(r)$ is
\begin{align*}
\mathcal{H}^r(\gamma(r))&=\frac{1}{2}\left((h+\mii)\coth\left(\frac{r}{2}\right)
+(2+l+3m-\mi)\tanh\left(\frac{r}{2}\right)\right)\\
& = \frac{1}{2}\left((\codim S_\g{w}-1)\coth\left(\frac{r}{2}\right)
+(\dim S_\g{w}+\dim \g{z})\tanh\left(\frac{r}{2}\right)\right).
\end{align*}
Therefore, for every $r>0$, the tube $M^r$ around $S_\g{w}$ is a
hypersurface with constant mean curvature, and hence, tubes around
the submanifold $S_\g{w}$ constitute an isoparametric family of
hypersurfaces in $AN$, that is, every tube $M^r$ is an isoparametric
hypersurface.
\end{corollary}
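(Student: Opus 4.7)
The plan is to read off the mean curvature as the trace of $\mathcal{S}^r$, since the previous proposition gives $\mathcal{S}^r$ explicitly in block-diagonal form. The single nontrivial observation will be that the trace collapses all of the auxiliary data that a priori could vary with $p$ or $\xi$, including the generalized K\"ahler angle.

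Concretely, I would add the diagonal entries block by block. The $\id_{l+1}$-summand contributes $\frac{l+1}{2}\tanh(r/2)$; the $\id_h$-summand contributes $\frac{h}{2}\coth(r/2)$; each of the $\mi-1$ two-by-two $\g{F}_i$-blocks contributes $\frac{1}{2}\coth(r/2)+\tanh(r/2)$; each of the $\mii-\mi+1$ three-by-three $\g{M}_i$-blocks contributes $\frac{1}{2}\tanh(r/2)+\frac{1}{2}\coth(r/2)+\tanh(r/2)$; and each of the $m-\mii$ two-by-two $\g{P}_i$-blocks contributes $\frac{3}{2}\tanh(r/2)$. The key feature, which is really what makes the corollary work, is that the K\"ahler angles $\varphi_i$ enter only the off-diagonal entries of the $\g{M}_i$-blocks (and nowhere else in $\mathcal{S}^r$), so they drop out of the trace. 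Collecting like terms yields coefficient $\frac{1}{2}(h+\mii)$ for $\coth(r/2)$ and coefficient $\frac{1}{2}(l+2+3m-\mi)$ for $\tanh(r/2)$, which is the first stated equality.

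To obtain the second form I would translate the indices into intrinsic dimensions using the orthonormal basis~\eqref{basis}. Since $\g{w}^\perp=\R\xi\oplus(\oplus_j\R\eta_j)\oplus(\oplus_{i=1}^{\mii}\R\bar{F}_i\xi)$, one reads $\codim S_\g{w}=1+h+\mii$. Likewise, $\dim\g{w}=l+(m-\mi+1)$ together with $\dim S_\g{w}=1+\dim\g{w}+m$ gives $\dim S_\g{w}+\dim\g{z}=2+l+3m-\mi$. Substituting these into the first expression immediately yields the second.

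The isoparametric conclusion is then automatic: the resulting expression depends only on $r$ and on the global invariants $\dim S_\g{w}$, $\codim S_\g{w}$, $\dim\g{z}$, none of which involve the choice of $p\in S_\g{w}$ or of unit normal $\xi\in\nu_p S_\g{w}$. Since the map $(p,\xi)\mapsto\gamma(r)$ parametrizes all of $M^r$, this shows $\mathcal{H}^r$ is constant on $M^r$; and since the same explicit function of $r$ gives the mean curvature of every parallel hypersurface, the family $\{M^r\}_{r>0}$ is isoparametric in the sense recalled in the introduction. No real obstacle is anticipated; the only bookkeeping care needed is with the degenerate conventions $\mi=1$ or $\mii=m$, which are arranged so that the corresponding collections of blocks are empty and simply contribute nothing to the trace.
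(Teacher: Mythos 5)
Your proposal is correct and takes the same route as the paper, which simply presents the corollary as an immediate consequence of the preceding proposition giving $\mathcal{S}^r$ in block-diagonal form; you have just written out the trace computation, the dimension bookkeeping via the basis~\eqref{basis}, and the observation that the $\varphi_i$ appear only off-diagonally, all of which the paper leaves implicit. The treatment of the degenerate cases $\mi>m$ or $\mii<1$ and the passage from constant mean curvature of each $M^r$ to the isoparametric property of the family are also handled exactly as the paper intends.
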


We can also give the characteristic polynomial of $\mathcal{S}^r$,
which can be written as
\[
p_{r,\xi}(x)=(\lambda-x)^{l+1}\left(\frac{1}{4\lambda}-x\right)^{\!h}\,
\prod_{i=1}^{m} q^i_{r,\xi}(x),
\]
where $\lambda=\frac{1}{2}\tanh\left(\frac{r}{2}\right)$, and
\begin{align*}
q^i_{r,\xi}(x)= {}& x^2-\left(2\lambda+\frac{1}{4\lambda}\right)x
+\frac{1}{4}+\lambda^2,\quad \text{ if } i=1,\dots, \mi-1,\\
q^i_{r,\xi}(x)= {}& -x^3+\left(3\lambda+\frac{1}{4\lambda}\right)x^2
-\frac{1}{2}\left(6\lambda^2+1\right)x
+\frac{16\lambda^4+16\lambda^2-1+(4\lambda^2-1)^2\cos 2\varphi_i}{32\lambda},
\\ {}& \text{ if } i=\mi,\dots,\mii,\\
q^i_{r,\xi}(x)= {}& x^2-3\lambda x-\frac{1}{4}+3\lambda^2,\quad \text{ if } i=\mii+1,\dots, m.
\end{align*}
The zeroes of $p_{r,\xi}$ are the principal curvatures of the tube
$M^r$ at the point  $\gamma(r)$. Notice that the zeroes of
$q^i_{r,\xi}$ for $i\in\{1,\dots,\mi-1\}$ are
$\lambda=\frac{1}{2}\tanh\left(\frac{r}{2}\right)$ and
$\lambda+\frac{1}{4\lambda}=\coth(r)$, while for
$i\in\{\mii+1,\dots,m\}$ they are
$\frac{1}{2}\left(3\lambda\pm\sqrt{1-3\lambda^2}\right)$. If
$i\in\{\mi,\dots,\mii\}$ the zeroes of $q^i_{r,\xi}$ are given by
complicated expressions, because they are solutions of a cubic
polynomial. This polynomial coincides with the one in~\cite[p.\
146]{BD09} (where an analysis of its zeroes is carried out) and
in~\cite[p.\ 5]{DRDV10}.

From these results we deduce that, in general, the principal
curvatures of $M^r$,  and even the number of principal curvatures of
$M^r$, may vary from point to point, which implies that in general,
$M^r$ is an inhomogeneous hypersurface. Actually, \emph{the principal
curvatures of $M^r$ are constant if and only if $\g{w}^\perp$ has
constant generalized K\"{a}hler angle}, that is, if the $m$-tuple
$(\varphi_1,\dots,\varphi_m)$ does not depend on $\xi$.

We summarize the main results obtained so far.

\begin{theorem}\label{thMain}
Let $AN$ be a Damek-Ricci space with Lie algebra $\g{a}\oplus\g{n}$,
where $\g{a}$ is one-dimensional and $\g{n}=\g{v}\oplus\g{z}$ is a
generalized Heisenberg algebra with center $\g{z}$. Let $S_{\g{w}}$
be the connected subgroup of $AN$ whose Lie algebra is
$\g{s}_{\g{w}}=\g{a}\oplus\g{w}\oplus\g{z}$, where $\g{w}$ is any
proper subspace of $\g{v}$.

Then, the tubes around the submanifold $S_{\g{w}}$ are isoparametric
hypersurfaces of $AN$, and have constant principal curvatures if and
only if $\g{w}^\perp=\g{v}\ominus\g{w}$ has constant generalized
K\"{a}hler angle.
\end{theorem}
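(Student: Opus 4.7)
The plan is to assemble the preceding results into a coherent statement; the computational heavy lifting has already been done in Proposition~\ref{prop:JacobiSolution} and in the ensuing explicit descriptions of $\mathcal{S}^r$ and of the characteristic polynomial $p_{r,\xi}$, so the theorem is mostly a matter of interpretation. That each $M^r$ is a smooth hypersurface for $r>0$ follows from the computation of $\det C(r)$ in Subsection~\ref{secJacobi}.

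For the isoparametric claim I would simply cite the corollary giving $\mathcal{H}^r(\gamma(r))$: the expression there depends only on $r$ together with the dimensional invariants $\codim S_\g{w}$, $\dim S_\g{w}$ and $\dim\g{z}$, not on $p\in S_\g{w}$ nor on the unit normal $\xi\in\nu_p S_\g{w}$. Since every point of $M^r$ has the form $\gamma_\xi(r)$ for some such $(p,\xi)$, the mean curvature is constant on $M^r$. The ``if'' direction of the equivalence is equally direct: when $\g{w}^\perp$ has constant generalized K\"ahler angle, both $\mi,\mii$ and $(\varphi_1,\dots,\varphi_m)$ are $\xi$-independent, so the coefficients of $p_{r,\xi}$, and hence its roots with multiplicities, do not vary with $\xi$.

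The converse requires more care. Assume that for every $r>0$ the multiset of roots of $p_{r,\xi}$ is independent of $\xi$; since $p_{r,\xi}$ is monic of fixed degree, $p_{r,\xi}$ itself is $\xi$-independent. I would choose $r$ in a dense open subset of $(0,\infty)$ for which the outer eigenvalues $\lambda=\tfrac{1}{2}\tanh(r/2)$, $\tfrac{1}{2}\coth(r/2)$, $\coth r$ and $\tfrac{1}{2}(3\lambda\pm\sqrt{1-3\lambda^2})$ are pairwise distinct and distinct from the roots of every intermediate cubic $q^i_{r,\xi}$; reading off their multiplicities in $p_{r,\xi}$ then determines $l$, $h$, $\mi-1$ and $m-\mii$, so all four integers are $\xi$-invariant. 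Consequently the remaining factor $\prod_{i=\mi}^{\mii}q^i_{r,\xi}(x)$ is a $\xi$-invariant polynomial of fixed degree, and its coefficients are, up to $r$-dependent scalars, the elementary symmetric functions in $\cos(2\varphi_\mi),\dots,\cos(2\varphi_\mii)$. Thus the multiset $\{\cos(2\varphi_i)\}_{i=\mi}^{\mii}$ is $\xi$-invariant, and since $\varphi\mapsto\cos(2\varphi)$ is a bijection $[0,\pi/2]\to[-1,1]$, so is $\{\varphi_\mi,\dots,\varphi_\mii\}$. The ordering convention of Theorem~\ref{thGKA} then pins down the full tuple $(\varphi_1,\dots,\varphi_m)$.

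The step I anticipate as the main obstacle is precisely this extraction in the converse: verifying that the outer blocks can be separated from the middle ones by their root data, and that the multiset of intermediate K\"ahler angles is faithfully recoverable from $\prod q^i_{r,\xi}$. Both points reduce to the observation that the coincidences to be ruled out are closed algebraic conditions on $r$, which cannot hold on an entire open interval of $r$-values.
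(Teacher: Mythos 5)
Your proposal is correct and follows the paper's own strategy: assemble Proposition~\ref{prop:JacobiSolution}, the nonvanishing of $\det C(r)$, the mean curvature formula (which depends only on $r$ and dimensional invariants, giving the isoparametric property), and the explicit characteristic polynomial $p_{r,\xi}$. The ``if'' direction is, as you say, immediate. The paper, however, states the ``only if'' direction as an assertion following the display of $p_{r,\xi}$, without a detailed proof; your multiplicity/factorization argument for the converse therefore makes explicit something the paper leaves implicit, and the genericity-in-$r$ step you flag is a legitimate way to separate the outer linear and quadratic blocks from the middle cubics.

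One small imprecision worth tightening: the coefficients of $\prod_{i=\mi}^{\mii}q^i_{r,\xi}(x)$ are not literally, up to $r$-dependent scalars, the elementary symmetric functions of $\cos(2\varphi_{\mi}),\dots,\cos(2\varphi_{\mii})$; they are more complicated polynomials in those quantities. The clean version of the step you need is the following. Write $q^i_{r,\xi}(x)=Q(x)+c_i$, where $Q(x)=-x^3+\bigl(3\lambda+\tfrac{1}{4\lambda}\bigr)x^2-\tfrac{1}{2}\bigl(6\lambda^2+1\bigr)x$ is the same for every $i$, and $c_i$ is an affine function of $\cos(2\varphi_i)$ whose slope $(4\lambda^2-1)^2/(32\lambda)$ is nonzero for every $r>0$ (since $\lambda=\tfrac{1}{2}\tanh(r/2)\in(0,1/2)$). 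Then $\prod_i q^i_{r,\xi}(x)=S(Q(x))$ for $S(y)=\prod_i(y+c_i)$, and $S$ is the unique polynomial of degree $\mii-\mi+1$ with this property (uniqueness because $Q$ is nonconstant). Hence $\xi$-invariance of $\prod_i q^i_{r,\xi}$ forces $\xi$-invariance of $S$, of the multiset $\{c_i\}$, and finally of the multiset $\{\cos(2\varphi_i)\}_{i=\mi}^{\mii}$. Together with the $\xi$-invariance of $\mi,\mii$ obtained from the outer multiplicities and the ordering convention of Theorem~\ref{thGKA}, this recovers the full tuple $(\varphi_1,\dots,\varphi_m)$ exactly as you intended.
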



\section{Rank-one symmetric spaces of noncompact type}\label{secSymmetric}

In this section we present some particular examples of isoparametric
hypersurfaces in  the noncompact rank one symmetric spaces of
nonconstant curvature. Note that, in the case of real hyperbolic
spaces, our method only gives rise to tubes around totally geodesic
real hyperbolic subspaces, which are well-known examples.

\subsection{Complex hyperbolic spaces $\C H^n$}\label{sec:CHn}
The study of this case was the aim of \cite{DRDV10}. As explained
there,  isoparametric hypersurfaces arising from our method are
homogeneous if and only if they have constant principal curvatures
(and hence, if and only if $\g{w}^\perp$ has constant K\"ahler
angle). It follows that for every $n\geq 3$ one obtains inhomogeneous
isoparametric hypersurfaces with nonconstant principal curvatures in
$\C H^n$.

\subsection{Quaternionic hyperbolic spaces $\mathbb{H}
H^n$}\label{sec:HHn}

Our definition of generalized K\"ahler angle includes as a particular
case the notion of quaternionic K\"ahler angle introduced in
\cite{BB01}. The construction of several examples of subspaces of
$\H^{n-1}$ with constant quaternionic K\"ahler angle led Berndt and
Br\"uck to some examples of cohomogeneity one actions on $\H H^n$. In
\cite{BT07}, Berndt and Tamaru proved that these examples exhaust all
cohomogeneity one actions on $\H H^n$ with a non-totally geodesic
singular orbit whenever $n=2$ or the codimension of the singular
orbit is two.

Moreover, they reduced the problem of classifying cohomogeneity one
actions on $\H H^n$ to the following one:  \emph{find all subspaces
$\g{w}^\perp$ of $\g{v}=\H^{n-1}$ with constant quaternionic K\"ahler
angle and determine for which of them there exists a subgroup of
$Sp(n-1)Sp(1)$ that acts transitively on the unit sphere of
$\g{w}^\perp$ (via the standard representation on $\H^{n-1}$)}.
However, a complete classification of cohomogeneity one actions on
quaternionic hyperbolic spaces is not yet known, and neither is a
classification of the subspaces of $\H^{n-1}$ with constant
quaternionic K\"ahler angle, which seems to be a difficult linear
algebra problem. Furthermore, it is not clear whether an answer to
this latter problem would directly lead to the answer of the former.
In fact, in view of Theorem \ref{thMain} a subspace $\g{w}^\perp$ of
$\g{v}$ with constant quaternionic K\"ahler angle gives rise to an
isoparametric hypersurface in $\H H^n$ with constant principal
curvatures, but then one would have to decide whether this
hypersurface is homogeneous or not. Nonetheless, what Theorem
\ref{thMain} guarantees, as well as in the case of complex hyperbolic
spaces, is the existence of inhomogeneous isoparametric hypersurfaces
with nonconstant principal curvatures in $\H H^n$, for every $n\geq
3$.

The subspaces of $\H^{n-1}$ with constant quaternionic K\"ahler angle
known up to now  can take the following values of quaternionic
K\"ahler angles \cite{BT07}: $(0,0,0)$, $(0,\pi/2,\pi/2)$,
$(\pi/2,\pi/2,\pi/2)$, $(0,0,\pi/2)$, $(\varphi,\pi/2,\pi/2)$ and
$(0,\varphi,\varphi)$. In this subsection, we will give new examples
of subspaces of $\mathbb{H}^{n-1}$, $n\geq 5$, with constant
quaternionic K\"ahler angle $(\varphi_1,\varphi_2,\varphi_3)$, with
$0<\varphi_1\leq \varphi_2\leq\varphi_3\leq\pi/2$, and
$\cos(\varphi_1)+\cos(\varphi_2)<1+\cos(\varphi_3)$. This includes,
for example, the cases $(\varphi,\varphi,\varphi)$, with
$0<\varphi<\pi/2$, and $(\varphi_1,\varphi_2,\pi/2)$, with
$\cos(\varphi_1)+\cos(\varphi_2)<1$. Theorem \ref{thMain} ensures
that these new subspaces yield new examples of isoparametric
hypersurfaces with constant principal curvatures in $\H H^n$. In
fact, these hypersurfaces are homogeneous, as shown in Theorem
\ref{th:cohom1}. This provides a large new family of cohomogeneity
one actions on quaternionic hyperbolic spaces.

From now on, $(i,i+1,i+2)$ will always be a cyclic permutation of
$(1,2,3)$. Fix a canonical basis $\{J_1,J_2,J_3\}$ of the
quaternionic structure of $\H^{n-1}$, that is, $J_i^2=-\id$ and $J_i
J_{i+1}=J_{i+2}=-J_{i+1}J_i$, with $i\in\{1,2,3\}$.

Let $0<\varphi_1\leq \varphi_2\leq\varphi_3\leq\pi/2$ with
$\cos(\varphi_1)+\cos(\varphi_2)<1+\cos(\varphi_3)$, and consider a
four dimensional totally real subspace of $\H^{n-1}$ and a basis of
unit vectors $\{e_0,e_1,e_2,e_3\}$ of it, where $\langle
e_0,e_i\rangle=0$, for $i=1,2,3$, and
\begin{equation}\label{eq:inner_products}
\langle e_{i}, e_{i+1}\rangle
=\frac{\cos(\varphi_{i+2})-\cos(\varphi_i)\cos(\varphi_{i+1})}
{\sin(\varphi_i)\sin(\varphi_{i+1})},\quad i=1,2,3.
\end{equation}
The existence of these vectors is ensured by the following

\begin{lemma}
We have:
\begin{enumerate}[{\rm (a)}]
\item Let $\alpha_1$, $\alpha_2$, $\alpha_3\in\R$. Then, there
    exists a basis of unit vectors $\{e_1,e_2,e_3\}$ of $\R^3$
    such that $\langle e_i,e_{i+1}\rangle=\alpha_{i+2}$ if and
    only if $\lvert\alpha_i\rvert<1$ for all $i$, and
    $\alpha_1^2+\alpha_2^2+\alpha_3^2<1+2\alpha_1\alpha_2\alpha_3$.\label{th:ei:1}
\item Assume $0<\varphi_1\leq \varphi_2\leq\varphi_3\leq\pi/2$.
    Then, there exists a basis of unit vectors $\{e_1,e_2,e_3\}$
    of $\R^3$ with the inner products as in
    \eqref{eq:inner_products} if and only if
    $\cos(\varphi_1)+\cos(\varphi_2)<1+\cos(\varphi_3)$.\label{th:ei:2}
\end{enumerate}
\end{lemma}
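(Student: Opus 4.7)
The plan is to formulate both parts in terms of positive-definiteness of a $3\times 3$ Gram matrix and to carry out the resulting inequality analysis. For part (a), the existence of a basis $\{e_1,e_2,e_3\}$ of unit vectors in $\R^3$ with the prescribed pairwise inner products is equivalent to positive-definiteness of the Gram matrix $G$ with $1$'s on the diagonal and $\alpha_i$ in the off-diagonal positions. A direct expansion yields $\det G = 1+2\alpha_1\alpha_2\alpha_3-\alpha_1^2-\alpha_2^2-\alpha_3^2$, and the factorization $\det G = (1-\alpha_i^2)(1-\alpha_j^2)-(\alpha_k-\alpha_i\alpha_j)^2$, valid for every cyclic permutation $(i,j,k)$ of $(1,2,3)$, shows that $\det G>0$ forces $1-\alpha_l^2>0$ for every $l$. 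Sylvester's criterion thus reduces positive-definiteness of $G$ to the two stated conditions.

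For part (b), apply (a) to the specific $\alpha_i$ given by (4.1). Recognizing the spherical law of cosines, these $\alpha_i=\cos A_i$ are the cosines of the angles of a spherical triangle with sides $\varphi_1,\varphi_2,\varphi_3$, with $A_i$ opposite to the side $\varphi_i$. Writing $c_l=\cos\varphi_l$, $s_l=\sin\varphi_l$, and $D=1+2c_1c_2c_3-c_1^2-c_2^2-c_3^2$, a routine computation yields $1-\alpha_i^2=D/(s_j^2 s_k^2)$, so the condition $|\alpha_i|<1$ for all $i$ is equivalent to $D>0$, i.e., to the spherical triangle inequality on the $\varphi_i$.

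The main obstacle is translating the remaining condition $\det G>0$ into $\cos\varphi_1+\cos\varphi_2<1+\cos\varphi_3$. By the factorization from part (a), $\det G>0$ is equivalent to $\cos(A_1+A_2)<\cos A_3$. Under the ordering $0<\varphi_1\le\varphi_2\le\varphi_3\le\pi/2$ one has $c_1\ge c_2\ge c_3\ge 0$ and $c_l<1$, whence $c_1\ge c_2 c_3$ and $c_2\ge c_1 c_3$; this forces $\alpha_1,\alpha_2\ge 0$, so $A_1,A_2\in(0,\pi/2]$ and $A_1+A_2\in(0,\pi]$, a range on which cosine is monotone. The inequality then becomes $A_1+A_2>A_3$. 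Substituting (4.1) into $\alpha_1\alpha_2-\sqrt{(1-\alpha_1^2)(1-\alpha_2^2)}<\alpha_3$, multiplying through by the positive quantity $s_1 s_2 s_3^2$, and simplifying collapses the inequality to $(c_1+c_2)^2<(1+c_3)^2$, which, since $c_1+c_2+1+c_3>0$, is equivalent to $\cos\varphi_1+\cos\varphi_2<1+\cos\varphi_3$. To close the argument, one checks that this single inequality also encodes the triangle inequality needed for $|\alpha_i|<1$: indeed, $(c_1+c_2-1)-(c_1 c_2-s_1 s_2)=\sqrt{(1-c_1)(1-c_2)}\bigl(\sqrt{(1+c_1)(1+c_2)}-\sqrt{(1-c_1)(1-c_2)}\bigr)>0$ for $c_1,c_2\in(0,1)$, so $c_3>c_1+c_2-1$ implies $c_3>\cos(\varphi_1+\varphi_2)$ and hence $\varphi_3<\varphi_1+\varphi_2$.
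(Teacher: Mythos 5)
Your approach shares the paper's core idea --- substitute the $\alpha_i$ from \eqref{eq:inner_products} into the two criteria from part (a) and simplify under the ordering --- but there is a genuine gap. You assert that the factorization of $\det G$ makes $\det G>0$ equivalent to $\cos(A_1+A_2)<\cos A_3$. Opening the square in $(\alpha_3-\alpha_1\alpha_2)^2<(1-\alpha_1^2)(1-\alpha_2^2)$ actually gives the two-sided condition $\cos(A_1+A_2)<\cos A_3<\cos(A_1-A_2)$, i.e.\ the full set of angular triangle inequalities $|A_1-A_2|<A_3<A_1+A_2$; the single inequality $A_3<A_1+A_2$ is strictly weaker. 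Your computation, multiplying $\alpha_1\alpha_2-\sqrt{(1-\alpha_1^2)(1-\alpha_2^2)}<\alpha_3$ by $s_1s_2s_3^2$, therefore treats only the lower bound. As a result, in the implication ``$\cos\varphi_1+\cos\varphi_2<1+\cos\varphi_3 \Rightarrow$ the basis exists'' you establish $|\alpha_i|<1$ and the lower bound, but never verify the upper bound $\alpha_3<\alpha_1\alpha_2+\sqrt{(1-\alpha_1^2)(1-\alpha_2^2)}$, so positive definiteness of the Gram matrix is not yet secured.

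The repair is short. Once $D=1+2c_1c_2c_3-c_1^2-c_2^2-c_3^2>0$ is in hand, the same substitution applied to the upper bound, multiplied through by $s_1s_2s_3^2>0$, reduces it to
\[
(1+c_3)\bigl((1-c_3)-(c_1-c_2)\bigr)\bigl((1-c_3)+(c_1-c_2)\bigr)>0,
\]
whose factors are all strictly positive once $0\le c_3\le c_2\le c_1<1$. So the upper bound holds automatically under the ordering hypothesis, imposing no further constraint, and with this observation your argument closes. For comparison, the paper avoids the split altogether by writing $-\det G$ as a single rational function of $\cos\varphi_1,\cos\varphi_2,\cos\varphi_3$ and reading off the sign of each linear factor under the ordering; your route is slightly longer but makes the spherical-triangle content explicit.
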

\begin{proof}
The proof of (\ref{th:ei:1}) is elementary so we omit it. For the
proof of (\ref{th:ei:2}) we only give a few basic indications and
leave the details to the reader.

We define $x_i=\cos(\varphi_i)$. With this notation, the conditions
$\lvert\alpha_i\rvert<1$ in part (\ref{th:ei:1}) are equivalent to
$x_1^2+x_2^2+x_3^2<1+2x_1 x_2 x_3$, whereas the condition
$\alpha_1^2+\alpha_2^2+\alpha_3^2<1+2\alpha_1\alpha_2\alpha_3$ turns
out to be equivalent to
\[
\frac{\left(x_1-x_2-x_3+1\right) \left(x_1+x_2-x_3-1\right) \left(x_1-x_2+x_3-1\right)
\left(x_1+x_2+x_3+1\right)}{\left(x_1^2-1\right) \left(x_2^2-1\right)
   \left(x_3^2-1\right)}<0.
\]
Since $0<\varphi_1\leq \varphi_2\leq\varphi_3\leq\pi/2$, we have
$0\leq x_3\leq x_2\leq x_1<1$, and thus the equation above is
equivalent to $x_1+x_2-x_3-1<0$. Finally, it is not hard to show that
$0\leq x_3\leq x_2\leq x_1<1$, and $x_1+x_2-x_3-1<0$ imply
$x_1^2+x_2^2+x_3^2<1+2x_1 x_2 x_3$.
\end{proof}

For the sake of simplicity let us define $\varphi_0=0$ and $J_0=\id$.
Notice that $\langle J_j e_k, e_l\rangle=0$ for $j\in\{1,2,3\}$ and
$k$, $l\in\{0,1,2,3\}$, because $\mathrm{span}\{e_0,e_1,e_2,e_3\}$ is
a totally real subspace of $\H^{n-1}$. Then we can define
\[
\xi_k = \cos(\varphi_k)J_k e_0+\sin(\varphi_k)J_k e_k, \quad k\in\{0,1,2,3\}.
\]
(Note that $\xi_0=e_0$.) We consider the subspace $\g{w}^\perp$
generated by these four vectors, for which $\{\xi_0, \xi_1,
\xi_2,\xi_3\}$ is an orthonormal basis. Now, taking a generic unit
vector $\xi=a_0\xi_0+a_1 \xi_1+a_2\xi_2+a_3\xi_3\in \g{w}^\perp$,
some straightforward calculations show that the matrix of the
quadratic form $Q_\xi$ defined in Theorem~\ref{thGKA} with respect to
the basis $\{J_1,J_2,J_3\}$ (i.e. the matrix whose entries are
$\langle F_j \xi, F_k \xi\rangle=\sum_{l=0}^3\langle J_j
\xi,\xi_l\rangle\langle J_k \xi,\xi_l\rangle$, for $j$,
$k\in\{1,2,3\}$) is the diagonal matrix with entries
$\cos^2(\varphi_1)$, $\cos^2(\varphi_2)$, $\cos^2(\varphi_3)$.
Therefore, {$\g{w}^\perp$ has constant quaternionic K\"ahler angle
$(\varphi_1,\varphi_2,\varphi_3)$}.

Next, we show that the submanifold $S_\g{w}$ is the singular orbit of
a cohomogeneity one action on $\H H^n$, and hence, the tubes around
$S_\g{w}$ are homogeneous isoparametric hypersurfaces. Let
$G=Sp(n,1)$ be the connected component of the identity of the
isometry group of $\H H^n$, and let $K=Sp(n)Sp(1)$ be the isotropy
group of $G$ at the identity element of $AN=\H H^n$. Denote by
$N_K(S_\g{w})=\{k\in K : kS_\g{w}k^{-1}\subset S_\g{w}\}$ the
normalizer of $S_\g{w}$ in $K$, and by $N_K^0(S_\g{w})$ the connected
component of the identity. Notice that $S_\g{w}$ can be seen as a
submanifold of $AN=\H H^n$, and also as a subgroup of $AN\subset G$.
We have:

\begin{theorem}\label{th:cohom1}
Let $\g{w}^\perp$ be the subspace of $\g{v}=\H^{n-1}$ of constant
quaternionic K\"ahler angle $(\varphi_1,\varphi_2,\varphi_3)$, with
$0<\varphi_1\leq \varphi_2\leq\varphi_3\leq\pi/2$, and
$\cos(\varphi_1)+\cos(\varphi_2)<1+\cos(\varphi_3)$, as defined
above, and consider $\g{w}=\g{v}\ominus\g{w}^\perp$. Then:
\begin{enumerate}[{\rm (a)}]
\item The tubes around the submanifold $S_\g{w}$ are
    isoparametric hypersurfaces with constant principal
    curvatures.\label{th:cohom1:known}

\item There is a subgroup of $Sp(n-1)Sp(1)$ that acts
    transitively on the unit sphere of $\g{w}^\perp$ (via the
    standard representation on
    $\g{v}=\H^{n-1}$).\label{th:cohom1:a}

\item The subgroup $N_{K}^0(S_\g{w})S_{\g{w}}$ of $G$ acts
    isometrically with cohomogeneity one on $\H H^n$, $S_\g{w}$
    is a singular orbit of this action, and the other orbits are
    tubes around $S_\g{w}$.\label{th:cohom1:b}
\end{enumerate}
\end{theorem}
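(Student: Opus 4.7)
Part (a) is immediate: the subspace $\g{w}^\perp$ has constant quaternionic K\"ahler angle $(\varphi_1,\varphi_2,\varphi_3)$ by the diagonalization of $Q_\xi$ carried out just before the theorem statement, so Theorem~\ref{thMain} applies directly and the tubes around $S_\g{w}$ are isoparametric with constant principal curvatures.

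For part (b) the plan is to construct an explicit compact subgroup $H\subset Sp(n-1)Sp(1)$ that preserves $\g{w}^\perp$ and acts transitively on its unit sphere $S^3\subset\g{w}^\perp$. The key ingredient is a ``diagonal'' embedding: the $Sp(1)$ factor of $Sp(n-1)Sp(1)$ acts on $\g{v}$ via the standard two-to-one cover $Sp(1)\to SO(3)$ rotating the triple $(J_1,J_2,J_3)$, while inside $Sp(n-1)$ one can pick a subgroup acting on the totally real subspace $\mathrm{span}\{e_1,e_2,e_3\}$ by orthogonal transformations of the non-standard inner product prescribed by \eqref{eq:inner_products} (and fixing $e_0$). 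Pairing these two actions by the same parameter produces a diagonal $SO(3)$ which permutes $\{\xi_1,\xi_2,\xi_3\}$ while fixing $\xi_0$, since each $\xi_k$ for $k=1,2,3$ mixes $J_k$ and $e_k$ in a symmetric way. To upgrade this to transitivity on all of $S^3$ rather than on the equatorial $S^2$ spanned by $\{\xi_1,\xi_2,\xi_3\}$, one augments $H$ by an additional one-parameter subgroup of $Sp(n-1)Sp(1)$ that rotates $\xi_0$ into a prescribed unit combination of $\xi_1,\xi_2,\xi_3$. Verifying that such a one-parameter family exists inside $Sp(n-1)Sp(1)$ and genuinely preserves the four-dimensional span of the $\xi_k$'s is the main technical hurdle; this is presumably the content of Berndt's suggestion acknowledged at the end of the introduction.

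Part (c) will then follow from (b) by a standard argument. Any $h\in H$ preserves $\g{w}^\perp$, hence preserves its orthogonal complement $\g{w}\subset\g{v}$, and therefore normalizes the Lie subalgebra $\g{s}_\g{w}=\g{a}\oplus\g{w}\oplus\g{z}$; thus $H\subset N_K^0(S_\g{w})$, so $HS_\g{w}\subseteq N_K^0(S_\g{w})S_\g{w}$. Since $S_\g{w}$ acts simply transitively on itself and every element of $K$ fixes the identity $e\in AN$, the orbit of $HS_\g{w}$ through $e$ is exactly $S_\g{w}$. For $r>0$, the orbit through a point $\gamma_\xi(r)$ on a normal geodesic consists of points of the form $s\cdot\gamma_{h\xi}(r)$ with $s\in S_\g{w}$ and $h\in H$; by transitivity of $H$ on unit vectors of $\g{w}^\perp$ combined with homogeneity of $S_\g{w}$, this orbit sweeps out the entire tube $M^r$. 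Therefore $N_K^0(S_\g{w})S_\g{w}$ acts on $\H H^n$ with cohomogeneity one, $S_\g{w}$ is its unique singular orbit, and the hypersurfaces $M^r$ are its principal orbits.
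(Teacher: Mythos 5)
Parts (a) and (c) match the paper's treatment: (a) is a direct consequence of the computations preceding the theorem together with Theorem~\ref{thMain}, and (c) follows from (b) by the Berndt--Br\"uck slice argument (the paper simply cites \cite[p.~220]{BB01} and \cite[Theorem~4.1(i)]{BT07}, while you spell out essentially that argument). So far so good.

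Part (b), however, contains a genuine gap. Your ``diagonal $SO(3)$'' does not preserve $\g{w}^\perp$ when the K\"ahler angles are distinct. Rotating $(J_1,J_2,J_3)$ by an element of $SO(3)$ and simultaneously applying the \emph{same} rotation to $(e_1,e_2,e_3)$ sends, e.g., $\xi_1=\cos(\varphi_1)J_1e_0+\sin(\varphi_1)J_1e_1$ to $\cos(\varphi_1)J_2e_0+\sin(\varphi_1)J_2e_2$, which lies in $\g{w}^\perp$ only if $\varphi_1=\varphi_2$. The assertion that ``each $\xi_k$ mixes $J_k$ and $e_k$ in a symmetric way'' overlooks that the mixing weights $\cos(\varphi_k),\sin(\varphi_k)$ differ with $k$. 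Moreover, even granting that piece, the one-parameter subgroup moving $\xi_0$ off the $\{\xi_1,\xi_2,\xi_3\}$-sphere is exactly what you identify as ``the main technical hurdle'' and is left unverified, so the proof is not actually completed.

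The paper's argument takes a quite different route that avoids this difficulty. It first shows that the maps $\bar F_1,\bar F_2,\bar F_3$ restrict to a quaternionic structure on $\g{w}^\perp$ (using $\bar F_i\xi_0=\xi_i$, $\bar F_i\xi_{i+1}=\xi_{i+2}$, and the degenerate case $\varphi_3=\pi/2$ handled by setting $\bar F_3:=\bar F_1\bar F_2$). Then, for an \emph{arbitrary} unit $\eta_0\in\g{w}^\perp$, Lemma~\ref{lemma:BB} produces vectors $f_1,f_2,f_3$ with $\bar F_if_0=\cos(\varphi_i)J_if_0+\sin(\varphi_i)J_if_i$ ($f_0:=\eta_0$), and one computes that $\{f_0,\dots,f_3\}$ is totally real with exactly the same Gram matrix as $\{e_0,\dots,e_3\}$. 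Gram--Schmidt on both quadruples and the standard quaternionic change of basis then give a single element $T\in Sp(4)\subset Sp(n-1)$ with $TJ_l=J_lT$, $T\xi_0=\eta_0$, and $T\g{w}^\perp=\g{w}^\perp$. Since $\eta_0$ was arbitrary, transitivity on the unit sphere of $\g{w}^\perp$ follows at once, with no need to exhibit the transitive subgroup itself. You should replace the diagonal-$SO(3)$ heuristic by this per-vector construction.
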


\begin{proof}
Assertion (\ref{th:cohom1:known}) follows from previous calculations
in this section. Part (\ref{th:cohom1:b}) follows from part
(\ref{th:cohom1:a}) using \cite[p.~220]{BB01}
(cf.~\cite[Theorem~4.1(i)]{BT07}). Let us then prove
(\ref{th:cohom1:a}). For the case $(\varphi,\pi/2,\pi/2)$, with
$0<\varphi\leq\pi/2$, assertion (\ref{th:ei:2}) is already known to
be true~\cite{BB01}. If $0<\varphi_1\leq\varphi_2<\varphi_3=\pi/2$,
then the proof given below needs to be adapted: we would get $F_3=0$,
and hence $\bar{F}_3$ would not be defined; in that case we would
explicitly define $\bar{F}_3=\bar{F}_1\bar{F}_2$. Thus, we will
assume $\varphi_3<\pi/2$ in what follows.

Let $i\in\{1,2,3\}$. As above, henceforth $(i,i+1,i+2)$ will be a
cyclic permutation of $(1,2,3)$. First note that ${F}_i\xi_0=\langle
J_i\xi_0,\xi_i\rangle \xi_i=\cos(\varphi_i)\xi_i$, and thus
$\bar{F}_i\xi_0=\xi_i$. This implies $\langle
F_i\xi_{i+1},\xi_0\rangle = -\langle \xi_{i+1},F_i\xi_0\rangle =
-\cos(\varphi_i)\langle\xi_{i+1},\xi_i\rangle =0$. The skew-symmetry
of $J_i$ yields $\langle F_i\xi_{i+1},\xi_{i+1}\rangle=0$, and from
$J_i\xi_{i+1}=\cos(\varphi_{i+1})J_{i+2}e_0+\sin(\varphi_{i+1})J_{i+2}e_{i+1}$
we get $\langle F_i\xi_{i+1},\xi_i\rangle =0$ using $\langle J_j
e_k,e_l\rangle =0$. Altogether this implies that $F_i\xi_{i+1}$ must
be a multiple of $\xi_{i+2}$, and hence one readily gets
$\bar{F}_i\xi_{i+1}=\xi_{i+2}=-\bar{F}_{i+1}\xi_i$. Applying these
results twice, we get
$\bar{F}_i\bar{F}_{i+1}=\bar{F}_{i+2}=-\bar{F}_{i+1}\bar{F}_i$, and
$\bar{F}_i^2=-\id$, so $\{\bar{F}_1,\bar{F}_2,\bar{F}_3\}$ is a
quaternionic structure on $\g{w}^\perp$.

Let $\eta_0\in\g{w}^\perp$ be an arbitrary unit vector. We define
$f_0=\eta_0$ and apply Lemma~\ref{lemma:BB} to find unit vectors
$f_1$, $f_2$, $f_3\in\H^{n-1}$ orthogonal to $f_0$, such that
$\eta_i=\bar{F}_i f_0=\cos(\varphi_i)J_i f_0+\sin(\varphi_i)J_i f_i$.
Then $f_i=-(J_i\bar{F}_i f_0+\cos(\varphi_i) f_0)/\sin(\varphi_i)$.
We easily obtain $\langle J_i\bar{F}_i f_0,f_0\rangle
=-\langle\bar{F}_i f_0,F_i f_0\rangle = -\cos(\varphi_i)$, and using
$\bar{F}_{i+2}\bar{F}_{i+1}=-\bar{F}_{i}$ we get
\begin{align*}
\langle J_i\bar{F}_i f_0,J_{i+1}\bar{F}_{i+1}f_0\rangle
&=-\langle\bar{F}_i f_0, J_{i+2}\bar{F}_{i+1}f_0\rangle
= -\cos(\varphi_{i+2})\langle \bar{F}_i f_0,\bar{F}_{i+2}\bar{F}_{i+1}f_0\rangle\\
&=\cos(\varphi_{i+2})\langle\bar{F}_i f_0,\bar{F}_i f_0\rangle=\cos(\varphi_{i+2}).
\end{align*}
Altogether this implies
\[
\langle f_i,f_{i+1}\rangle=\frac{\cos(\varphi_{i+2})-\cos(\varphi_i)\cos(\varphi_{i+1})}
{\sin(\varphi_i)\sin(\varphi_{i+1})}=\langle e_i,e_{i+1}\rangle.
\]
We show that $\langle J_j f_k,f_l\rangle =0$ for all $j\in\{1,2,3\}$,
and $k$, $l\in\{0,1,2,3\}$. For example,
\begin{align*}
\langle J_i f_{i+1},f_{i+2}\rangle
&=\frac{1}{\sin(\varphi_{i+1})\sin(\varphi_{i+2})}
\langle J_{i+2}\bar{F}_{i+1}f_0+\cos(\varphi_{i+1})J_i f_0,
    J_{i+2}\bar{F}_{i+2}f_0+\cos(\varphi_{i+2})f_0\rangle.
\end{align*}
Using the properties of the generalized K\"{a}hler angle (see
Theorem~\ref{thGKA}), and the definition of $\bar{F}_i$, we obtain
$\langle J_{i+2}\bar{F}_{i+1}f_0,J_{i+2}\bar{F}_{i+2}f_0\rangle
=\langle \bar{F}_{i+1}f_0,\bar{F}_{i+2}f_0\rangle = 0$. Similarly,
one gets $\langle J_{i+2}\bar{F}_{i+1} f_0,f_0\rangle =\langle
J_{i}f_0,J_{i+2}\bar{F}_{i+2}f_0\rangle=\langle
J_{i}f_0,f_0\rangle=0$, and hence $\langle J_i
f_{i+1},f_{i+2}\rangle=0$. Other combinations of indices can be
handled analogously to obtain $\langle J_j f_k,f_l\rangle =0$.

Now, one can apply the Gram-Schmidt process to $\{e_0,e_1,e_2,e_3\}$
to obtain an $\H$-ortho\-nor\-mal set $\{e_0',e_1',e_2',e_3'\}$, and
similarly with $\{f_0,f_1,f_2,f_3\}$, to obtain an $\H$-orthonormal
set $\{f_0',f_1',f_2',f_3'\}$. Then there exists an element $T\in
Sp(4)\subset Sp(n-1)\subset Sp(n-1)Sp(1)$ such that $Te_i'=f_i'$ for
$i=0,1,2,3$, and $TJ_l=J_lT$ for $l=1,2,3$. Since the transition
matrices from $\{e_i'\}$ to $\{e_i\}$, and from $\{f_i'\}$ to
$\{f_i\}$ coincide, we get $Te_i=f_i$, and hence $T\xi_i=\eta_i$ for
$i=0,1,2,3$. Therefore, $T\xi_0=\eta_0$ and
$T\g{w}^\perp=\g{w}^\perp$. Since $\eta_0\in\g{w}^\perp$ is
arbitrary, (\ref{th:cohom1:a}) follows.
\end{proof}

\begin{remark}
This construction can be extended to subspaces of $\H^{n-1}$ (for $n$
sufficiently high) with real dimension multiple of four and with
constant quaternionic K\"ahler angle
$(\varphi_1,\varphi_2,\varphi_3)$ as before, just by considering
orthogonal sums of subspaces $\g{w}^\perp$ like the one constructed
above. Theorem~\ref{th:cohom1} can easily be extended to show the
homogeneity of the corresponding isoparametric hypersurfaces.
\end{remark}

\subsection{The Cayley hyperbolic plane
$\mathbb{O}H^2$}\label{sec:OH2}

Based on some results of \cite{BB01}, Berndt and Tamaru achieved the
classification of homogeneous hypersurfaces in the Cayley hyperbolic
plane $\mathbb{O}H^2$ \cite{BT07}. Some of these homogeneous examples
appear as particular cases of the construction we have developed. If
we put $k=\dim \g{w}^\perp$, then for $k\in\{1,2,3,4,6,7,8\}$ the
tubes $M^r$ around $S_\g{w}$ are homogeneous hypersurfaces for every
$r>0$ and, together with $S_\g{w}$, constitute the orbits of a
cohomogeneity one action; if $k=5$, none of the tubes around
$S_\g{w}$ is homogeneous~\cite[p.~233]{BB01}.

Therefore, our method yields a family of {inhomogeneous isoparametric
hypersurfaces} if and only if the codimension of $S_{\g{w}}$ is
$k=5$. But for this case, something unexpected happens: these
hypersurfaces have {constant principal curvatures}.

Let $\xi$ be a unit vector in $\g{w}^\perp$. Taking into account that
$\g{v}=\R^8$ is an irreducible Clifford module of $\g{z}=\R^7$, the
properties of generalized Heisenberg algebras imply that the linear
map $Z\in\g{z}\mapsto J_Z\xi\in\g{v}\ominus\R \xi$ is an isometry.
Hence, we can find an orthonormal basis $\{Z_1,\dots, Z_7\}$ of the
vector space $\g{z}$ such that $\g{w}=\textrm{span}\{J_{Z_5} \xi,
J_{Z_6} \xi, J_{Z_7} \xi\}$ and $\g{w}^\perp=\textrm{span}\{\xi,
J_{Z_1} \xi, J_{Z_2} \xi, J_{Z_3} \xi, J_{Z_4} \xi\}$. It is then
clear that $J_{Z_5}, J_{Z_6}, J_{Z_7}$ map $\xi$ into $\g{w}$ and
$J_{Z_1}, J_{Z_2}, J_{Z_3}, J_{Z_4}$ map $\xi$ into $\g{w}^\perp$. By
definition, the generalized K\"ahler angle of $\xi$ with respect to
$\g{w}^\perp$ is $(0,0,0,0,\pi/2,\pi/2,\pi/2)$.

As the above argument is valid for every unit $\xi\in\g{w}^\perp$, we
conclude that $\g{w}^\perp$ has constant K\"ahler angle and so, by
Theorem \ref{thMain} and the inhomogeneity result in \cite{BB01}, we
obtain:

\begin{theorem}\label{th:CH2}
The tubes around the homogeneous submanifolds $S_\g{w}$ with
$\dim\g{w}^\perp=5$ are inhomogeneous isoparametric hypersurfaces
with constant principal curvatures in the Cayley hyperbolic plane.
\end{theorem}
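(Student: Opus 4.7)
The plan is to assemble the observations made in the paragraph just before the theorem statement into a formal argument. The theorem has two parts: constant principal curvatures of the tubes and their inhomogeneity. The first will follow from Theorem \ref{thMain} once I verify that $\g{w}^\perp$ has constant generalized K\"ahler angle $(0,0,0,0,\pi/2,\pi/2,\pi/2)$; the second is imported directly from \cite{BB01}, which handles exactly the codimension-$5$ case in $\mathbb{O}H^2$.

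First, I would fix an arbitrary unit vector $\xi\in\g{w}^\perp$ and exploit the feature that is special to the Cayley setting: $\g{v}=\R^8$ is an irreducible Clifford module over $Cl(\g{z},q)=Cl(\R^7)$. Using the H-type identity $\langle J_Z\xi,J_{Z'}\xi\rangle=\langle Z,Z'\rangle$ (valid for unit $\xi$) together with the skew-symmetry of $J_Z$, which yields $J_Z\xi\perp\xi$, the linear map $Z\in\g{z}\mapsto J_Z\xi\in\g{v}\ominus\R\xi$ is an isometric embedding between two $7$-dimensional spaces, hence an isometric isomorphism. Since $\dim\g{w}=3$ and $\dim(\g{w}^\perp\ominus\R\xi)=4$, I can pull back orthonormal bases of $\g{w}$ and of $\g{w}^\perp\ominus\R\xi$ through this isomorphism and concatenate them to obtain an orthonormal basis $\{Z_1,\dots,Z_7\}$ of $\g{z}$ such that $J_{Z_i}\xi\in\g{w}^\perp\ominus\R\xi$ for $i\in\{1,\dots,4\}$ and $J_{Z_j}\xi\in\g{w}$ for $j\in\{5,6,7\}$.

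With this basis, $F_{Z_i}\xi=J_{Z_i}\xi$ has unit length for $i\in\{1,\dots,4\}$, so the Kähler angle with respect to $Z_i$ is $0$, while $F_{Z_j}\xi=0$ for $j\in\{5,6,7\}$, so the corresponding Kähler angle is $\pi/2$. The orthonormality of the $Z_k$ combined with the isometry property yields $\langle F_{Z_i}\xi,F_{Z_j}\xi\rangle=\langle P_{Z_i}\xi,P_{Z_j}\xi\rangle=0$ whenever $i\neq j$. By the uniqueness statement of Theorem \ref{thGKA}, the generalized Kähler angle of $\xi$ with respect to $\g{w}^\perp$ is the $7$-tuple $(0,0,0,0,\pi/2,\pi/2,\pi/2)$, and since this is independent of the chosen unit $\xi$, the subspace $\g{w}^\perp$ has constant generalized Kähler angle. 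Theorem \ref{thMain} then guarantees that every tube $M^r$ around $S_\g{w}$ is an isoparametric hypersurface with constant principal curvatures.

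Finally, for the inhomogeneity I would simply invoke \cite[p.~233]{BB01}, which states that for $\dim\g{w}^\perp=5$ in $\mathbb{O}H^2$ no tube around $S_\g{w}$ is homogeneous; combined with the preceding step this gives Theorem \ref{th:CH2}. The main obstacle is not computational, since the angle computation reduces to a one-line consequence of the Clifford-module structure; the delicate conceptual point is matching the submanifold $S_\g{w}$ in our setup with the one to which the Berndt--Brück inhomogeneity result applies, so that the citation is legitimate.
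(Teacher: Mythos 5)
Your proposal is correct and follows essentially the same route as the paper: use the irreducibility of $\g{v}=\R^8$ as a Clifford module over $Cl(\R^7)$ to see that $Z\mapsto J_Z\xi$ is an isometry onto $\g{v}\ominus\R\xi$, read off the generalized K\"ahler angle $(0,0,0,0,\pi/2,\pi/2,\pi/2)$ independently of $\xi$, apply Theorem~\ref{thMain}, and import the inhomogeneity from \cite[p.~233]{BB01}. Your extra care in verifying the orthogonality conditions and invoking the uniqueness clause of Theorem~\ref{thGKA} is a harmless elaboration, not a different argument.
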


Let us consider now the case $k=4$. A similar argument as above can
be used to show that the generalized K\"{a}hler angle of $\g{w}^\perp$ is
$(0,0,0,\pi/2,\pi/2,\pi/2,\pi/2)$. Then, the calculations before
Theorem~\ref{thMain} show that for any choice of $\g{w}^\perp$ the
principal curvatures, and their corresponding multiplicities, of the
tube of radius $r$ around $S_{\g{w}}$, depend only on $r$.
By~\cite[Theorem 4.7]{BT07}, it follows that there are uncountably
many orbit equivalence classes of cohomogeneity one actions on
$\mathbb{O} H^2$ arising from this method with $k=4$. Therefore, we
obtain an \emph{uncountable family of noncongruent homogeneous
isoparametric systems with the same constant principal curvatures,
counted with multiplicities}. This phenomenon was known in the
inhomogeneous case for spheres~\cite{FKM81}, and in the homogeneous
case for noncompact symmetric spaces of rank higher than
two~\cite{BT03}.


\end{document}